\newtheorem{theorem}{Theorem}[section]
\newtheorem{lemma}[theorem]{Lemma}
\newtheorem{proposition}{Proposition}[section]
\theoremstyle{definition}
\newtheorem{definition}[theorem]{Definition}
\theoremstyle{remark}
\newtheorem{remark}[theorem]{Remark}
\numberwithin{equation}{section}
\newcommand{\blankbox}[2]{%
	\parbox{\columnwidth}{\centering
		\setlength{\fboxsep}{0pt}%
		\fbox{\raisebox{0pt}[#2]{\hspace{#1}}}%
\makeatother
\makeatletter
\@namedef{subjclassname@2020}{\textup{2020} Mathematics Subject Classification}
\makeatother
	}%
}
\begin{document}
	\title[Relativistic BGK model for gas mixtures]{Stationary solutions to the relativistic BGK model for gas mixtures in a slab}

	\author[B.-H. Hwang]{Byung-Hoon Hwang}
	\address{Department of Mathematics Education, Sangmyung University, 20 Hongjimun 2-gil, Jongno-Gu, Seoul 03016, Republic of Korea}
	\email{bhhwang@smu.ac.kr}

	\author[M.-S. Lee ]{Myeong-Su Lee}
	\address{Department of Mathematical Science, Korean Advanced Institute of Science and Technology, Daehak-ro 291, Yuseong-gu, Daejeon, Republic of Korea	}
	\email{msl3573@kaist.ac.kr}

\subjclass[2020]{34K21, 35Q20, 35Q75, 76P05, 83A05}

	\keywords{ relativistic kinetic theory, BGK model, gas mixtures, slab problems, stationary solutions}
	
\begin{abstract}
In a recent paper \cite{HLY2},  the authors proposed a BGK model for relativistic gas mixtures  based on the Marle-type approximation, which satisfies the fundamental kinetic properties: non-negativity of distribution functions, conservation laws, H-theorem, and indifferentiability principle. In this paper, we are concerned with the stationary problems to the relativistic BGK model for gas mixtures in slab geometry.  We establish the existence of a unique mild solution with the fixed inflow boundary data when the collision frequencies for each species are sufficiently small.  
 \end{abstract}
	\maketitle

\section{Introduction}
\subsection{History and Motivation}
 The Boltzmann equation is fundamental in kinetic theory. However, its practical application, particularly in simulating various flow problems, is limited due to its high computational costs. For this reason, the BGK model \cite{BGK,W} was proposed as a relaxation-time approximation of the Boltzmann equation, replacing the complicated Boltzmann collision operator with a simpler one (so-called the BGK operator). Due to the consistency with the Boltzmann equation as well as its efficiency, many researchers have developed the BGK approximation to model the gas dynamics in various physical contexts. In the relativistic framework, the BGK model was generalized by Marle \cite{Marle,Marle2}, and by Anderson and Witting \cite{AW} for a monatomic single gas.  In \cite{PR}, Pennisi and Ruggeri suggested a new relativistic BGK model for a polyatomic single gas by applying the  generalized relativistic Maxwellian (often called the J\"{u}ttner distribution), see \cite{PR2}. These models have been fruitfully used to understand various relativistic flow problems \cite{CKT,DHMNS,Kremer2,MBHS,MNR}. 
 
 In the past decade, there have been extensive mathematical studies on the above relativistic BGK models. The earliest mathematical research can be traced back to \cite{BCN} where the unique determination of equilibrium parameters, scaling limits, and linearized problems were addressed for the Marle model. In \cite{BNU}, the global existence and asymptotic stability for the nonlinear Marle model were established in the near-equilibrium regime. The global existence of weak solutions was studied in \cite{CJS} by developing the averaging lemma in the relativistic context. Stationary problems were considered in \cite{HY3} for fixed inflow boundary data in a slab. Regarding the Anderson–Witting model, the global existence and large-time behavior of classical solutions were established in \cite{HY2}. The unique determination of equilibrium parameters and the existence of bounded solutions were discussed in \cite{Hwang}.  It is found in \cite{HLY} that the Anderson–Witting model is solvable explicitly in FLRW spacetime. The stationary problems were studied in  \cite{HY4}. For the Pennisi–Ruggeri model, the global existence and large-time behavior were studied in \cite{HRY}.

From a practical point of view, it is important to consider systems of gas mixtures as well as single-species gases.  Despite this importance, in contrast to the extensive studies of relativistic BGK models for single-species gases, the literature for relativistic BGK models for gas mixtures has been extremely limited. The research on relativistic BGK models for gas mixtures had usually been focused on the formal computation of the transport coefficients without providing a complete presentation of the relaxation operators (see \cite{Kremer,Kremer2,Kremer3}), and thus the existence theory could not be studied yet. Motivated by this, the authors \cite{HLY2} recently proposed the relativistic BGK model for gas mixtures with a precise definition of relaxation operators.  In this paper, we  aim to establish the existence and uniqueness of stationary solutions to the recent relativistic BGK model \cite{HLY2} in slab geometry.

\subsection{Relativistic BGK model for gas mixtures}  Let us consider relativistic gas mixtures of $N$ constituents in Minkowski space-time with metric tensor $\eta_{\mu\nu}$ and its inverse $\eta^{\mu\nu}$: 
$$
\eta_{\mu\nu}=\eta^{\mu\nu}=\text{diag}(1,-1,-1,-1).
$$
where Greek indices range from $0$ to $3$. We use the raising and lowering indices
$$
\eta_{\mu\nu} a^\nu= a_\mu,\qquad \eta^{\mu\nu} a_\nu= a^\mu
$$ and follow the Einstein summation convention so that
$$
a^\mu b_\mu =\eta_{\mu\nu}a^\mu b^\nu= a^0 b^0-\sum_{j=1}^3a^j  b^j=	a_\mu b^\mu.
$$
The particles of $i$  species $(i=1,\cdots, N)$ are characterized by the space-time coordinates $x^{\mu}$ and the four-momentums $p_i^{\mu}$
$$
x^{\mu} =(ct,x)\in \mathbb{R}_+\times \Omega,\qquad p_i^{\mu}=(\sqrt{(cm_i)^2+|p_i|^2},p_i)\in\mathbb{R}_+\times\mathbb{R}^3.
$$ 
 where $c$ is the speed of light and $m_i$ is the rest mass of $i$ species. Let $f_i(x^{\mu}, p_i^{\mu})$ be the momentum distribution function representing the number density of $i$ species  at the phase point $(x^{\mu},p_i^{\mu})$. Then the relativistic BGK model proposed in \cite{HLY2} reads
 \begin{align}\label{RBGK}
\partial_t f_i+\frac{cp_i}{p_i^0}\cdot\nabla_x f_i=\frac{cm_i}{\tau_i p_i^0}(\mathcal{J}_i-f_i)=:Q_i, \qquad i=1,\cdots,N.
\end{align} 
Here $\tau_i$ denotes the characteristic time of order of the time between collisions and $\mathcal{J}_i$ is the attractor designed based on the J\"{u}ttner distribution \cite{Juttner}, given by
\begin{equation}\label{Juttner2}
\mathcal{J}_i=\frac{\int_{\mathbb{R}^3} f_i \frac{dp_i}{p_i^0}}{\int_{\mathbb{R}^3} e^{-c\widetilde{\beta} p_i^0} \frac{dp_i}{p_i^0}}e^{-\widetilde{\beta} \widetilde{U}^\mu p_{i \mu }},\quad\mbox{with} \quad \widetilde{\beta}:=\frac{1}{k\widetilde{T}} 
\end{equation}
where $k$ is the Boltzmann constant, $\widetilde{U}^\mu$ is the auxiliary four-velocity, and $\widetilde{T}=1/k\widetilde{\beta}$ is the auxiliary temperature. To define $\widetilde{U}^\mu$ and $\widetilde{\beta}$, we introduce the partial particle four-flow $N_i^\mu$ and the partial energy-momentum tensor $T_i^{\mu\nu}$:
$$
N_i^\mu=c\int_{\mathbb{R}^3}p^\mu_if_i \,\frac{dp_i}{p_i^0},\qquad T_i^{\mu\nu}=c\int_{\mathbb{R}^3}p_i^\mu p_i^\nu f_i \frac{dp_i}{p_i^0}.
$$
According to the Eckart frame \cite{Eckart}, $N_i^\mu$ is decomposed into
\begin{equation*}
N_i^\mu= n_iU_i^\mu
\end{equation*}
where $n_i$ is the (macroscopic) number density and $U_i^\mu$ is the Eckart four-velocity defined by
\begin{align*}\begin{split}
n_i&=\frac{1}{c}\int_{\mathbb{R}^3}p^\mu_i U_{i \mu }f_i \,\frac{dp_i}{p_i^0}=\Biggl\{\biggl(\int_{\mathbb{R}^3}f_i \,dp_i\biggl)^2-\sum_{j=1}^3\biggl(\int_{\mathbb{R}^3}p^j_if_i \,\frac{dp_i}{p_i^0}\biggl)^2\Biggl\}^{\frac{1}{2}},\cr
U_i^\mu&=\frac{c}{n_i}\int_{\mathbb{R}^3}p^\mu_if_i \,\frac{dp_i}{p_i^0}
\end{split}\end{align*}
respectively. Here $U^\mu_i$ has a constant length in the following sense
$$
U^\mu_i U_{i \mu }=c^2,\qquad \text{and hence}\qquad U_i^\mu=\left(\sqrt{c^2+|U_i|^2},U_i\right).
$$
Then the auxiliary four-velocity $\widetilde{U}^\mu$ is defined by
\begin{align}\label{def_U}
\widetilde{U}^\mu=c\frac{\sum_{i=1}^{N}\frac{m_i}{\tau_i}n_iU_i^\mu}{\sqrt{\left( \sum_{i=1}^{N}\frac{m_i }{\tau_i}n_iU_i^\mu\right)\left( \sum_{j=1}^{N}\frac{m_j }{\tau_j}n_jU_{j\mu}\right)}}, 
\end{align}
and thus $\widetilde{U}^\mu \widetilde{U}_\mu=c^2$. The auxiliary parameter $\widetilde{\beta}$ is determined by the nonlinear relation
\begin{align}\label{beta identity}
\sum_{i=1}^{N}\frac{m_i}{\tau_i}\Phi_i(\widetilde{\beta})\int_{\mathbb{R}^3}f_i \,\frac{dp_i}{p_i^0}=\frac{1}{c}\left[\left( \sum_{i=1}^{N}\frac{m_i }{\tau_i}n_iU_i^\mu\right)\left( \sum_{j=1}^{N}\frac{m_j }{\tau_j}n_jU_{j\mu}\right)\right]^{\frac{1}{2}}.
\end{align}
where $\Phi_i$ denotes
$$
\Phi_i(\widetilde{\beta})=\frac{\int_{\mathbb{R}^3} e^{-c\widetilde{\beta}p_i^0}\,dp_i}{\int_{\mathbb{R}^3} e^{-c\widetilde{\beta}p_i^0}\,\frac{dp_i}{p_i^0}}.
$$
Note from \cite{HLY} that if $f_i$ $(i=1,\cdots,N)$ is non-negative, not trivially zero, and belongs to $L^1(\mathbb{R}^3)$, then $\widetilde{\beta}$ is uniquely determined by the relation \eqref{beta identity}. Due to \eqref{def_U} and \eqref{beta identity}, the attractor $\mathcal{J}_i$  satisfies
\begin{align*}
\begin{split}
&\int_{\mathbb{R}^3} Q_i\, dp_i=0 \quad (i=1,\cdots,N), \qquad \sum_{i=1}^{N}\int_{\mathbb{R}^3} p_i^\mu Q_i\,dp_i=0,
\end{split}
\end{align*}
which leads to the conservation laws for the partial particle four-flows $N_i^\mu$ and the energy-momentum tensor $T^{\mu\nu}$ of gas mixtures:
$$
\frac{\partial N_i^\mu}{\partial x^\mu}=0,\qquad \frac{\partial }{\partial x^\nu}\sum_{i=1}^{N}T_i^{\mu\nu}=0.
$$
Also, the standard argument gives the H-theorem:
	$$
\frac{\partial S^\mu}{\partial x^\mu} \ge 0.
$$
Here $S^\mu$ denotes the entropy four-flow of gas mixtures defined by
$$
S^\mu= -kc\sum_{i=1}^N\int_{\mathbb{R}^3}p_i^\mu  f_i \ln\left( \frac{f_i h^3}{g_{s_i}}\right) \,\frac{dp_i}{p_i^0}.
$$
where $h$ is the Planck constant and $g_{s_i}$ the degeneracy factor of $i$ species with spin $s_i$:
$$
g_{s_i}=\begin{cases}
2s_i+1 & \text{for}\quad m\neq 0\cr
2s_i & \text{for}\quad m=0
\end{cases}.
$$

\subsection{Slab problems and main result} In this subsection, we briefly explain the slab problems and then state the main result of this paper. The slab problems describe the situation that gas molecules are assumed to be between two parallel planes. Simply put, these planes are positioned at $x=0$ and $x=1$, and are orthogonal to $x-$axis, and the distribution of gas molecules depend only on the direction of $x$. Such problems can be applied in the analysis of the Knudsen layer \cite{SAY86}.  For more physical examples, we refer to \cite{ATK,cer} and references therein.

The stationary problem of the relativistic BGK model \eqref{RBGK} in slab geometry is written as:
\begin{align}\label{SRBGK}
	p_i^1 \partial_xf_i=\omega_i(\mathcal{J}_i-f_i), \qquad (x,p_i)\in [0,1]\times \mathbb{R}^3
\end{align}
for $i=1,\cdots,N$, subject to the fixed inflow boundary data 
$$
f_i(0,p)=f_{i,L}(p_i)\ \ \text{for}\ \ p_i^1>0,\quad f_i(1,p)=f_{i,R}(p_i)\ \ \text{for}\ \ p_i^1<0,
$$
where $\omega_i:=\frac{m_i}{\tau_i} $ stands for the collision frequency of species $i$. Throughout this paper, the boundary data is often denoted as
$$f_{i,LR} =f_{i,L}\bold{1}_{p_i^1>0}+f_{i,R}\bold{1}_{p_i^1<0}$$
for brevity. Define the notations:
\begin{align}
	\begin{split}\label{lambda}
		&a_{i,\ell}=\frac{1}{4}\int_{\mathbb{R}^3} f_{i,LR}\frac{dp_i}{p_i^0} ,\qquad a_{i,u}=2\int_{\mathbb{R}^3} f_{i,LR}\, dp_i,\\
		&\lambda_i =\left(\int_{\mathbb{R}^3} f_{i,LR} \,dp_i\int_{\mathbb{R}^3}\frac{1}{(p_i^0)^2} f_{i,LR} \,dp_i\right)^{\frac{1}{2}}\left(\int_{\mathbb{R}^3} f_{i,LR} \, \frac{dp_i}{p_i^0}\right)^{-1},\\
		&\gamma=\underset{i=1,\cdots, N}{\max}\{\lambda_i^{-1/4}\}.
	\end{split}
\end{align}
\begin{remark}\label{lambda1}
	The Cauchy-Schwartz inequality, together with the fact that $f_{i,LR}$ and $f_{i,LR}/(p_i^0)^2$ are not identically equal, implies that the constants $\lambda_i$ are strictly greater than $1$. 
\end{remark}
We introduce our concept of solutions to the system of \eqref{SRBGK}.
\begin{definition}
	A $N$-tuple of functions $f:= \left(f_1(x,p_1),\cdots,f_N(x,p_N) \right)$ consisting of non-negative $L^1$-functions is called a mild solution to the system \eqref{SRBGK} if each $f_i$ $(i=1,\cdots,N)$ is given by
	\begin{align*}
		f_i(x,p_i)&=\left(e^{-\frac{\omega_i}{|p_i^1|}x}f_{i,L}+\frac{\omega_i}{|p_i^1|}\int_0^xe^{-\frac{\omega_i}{|p_i^1|}(x-y)}\mathcal{J}_i(y)\, dy\right)\bold{1}_{p_i^1>0}\cr
		&+\left(e^{-\frac{\omega_i}{|p_i^1|}(1-x)}f_{i,R}+\frac{\omega_i}{|p_i^1|}\int_x^1e^{-\frac{\omega_i}{|p_i^1|}(y-x)}\mathcal{J}_i(y)\,dy\right)\bold{1}_{p_i^1<0}.
	\end{align*}
\end{definition} 

\begin{align}\label{lambda}
	\lambda_i =\left(\int_{\mathbb{R}^3} f_{i,LR} \,dp_i\int_{\mathbb{R}^3}\frac{1}{(p_i^0)^2} f_{i,LR} \,dp_i\right)^{\frac{1}{2}}\left(\int_{\mathbb{R}^3} f_{i,LR} \, \frac{dp_i}{p_i^0}\right)^{-1},\quad \gamma=\underset{i=1,\cdots, N}{\max}\{\lambda_i^{-1/4}\}.
\end{align}
Then our main result is stated as follows.
\begin{theorem}\label{main}
	Suppose that the inflow boundary data satisfies
	$$
	f_{i,LR}\ge 0,\quad f_{i,LR}\in L^1(\mathbb{R}^3),\quad a_{i,\ell}>0.
	$$
	Then, there exists a constant $\varepsilon>0$ such that if $\omega_i<\varepsilon$ for all $i=1,\cdots,N$, \eqref{SRBGK} admits a unique mild solution $f=(f_1,\cdots,f_N)$ to the system \eqref{SRBGK} satisfying
	\begin{enumerate}
		\item Each $f_i$  satisfies 
		$$ f_i\ge 0,\qquad a_{i,\ell}\le \int_{\mathbb{R}^3} f_i\, \frac{dp_i}{p_i^0},\qquad \int_{\mathbb{R}^3} f_i \, dp_i\le a_{i,u}.$$
		\item The auxiliary parameter $\tilde{\beta}$ determined by \eqref{beta identity} satisfies
		$$  \gamma\sum_{i=1}^{N}cm_i\omega_i \lambda_i  \int_{\mathbb{R}^3} f_{i,LR}\,\frac{dp_i}{p_i^0}\le \sum_{i=1}^{N}\omega_i \Phi_i(\widetilde{\beta}) \sqrt{\lambda_i} \int_{\mathbb{R}^3} f_{i,LR}\,\frac{dp_i}{p_i^0}.
		$$
	\end{enumerate}
	
\end{theorem}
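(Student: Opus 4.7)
\medskip
\noindent\textbf{Proof proposal.}
The plan is to construct the mild solution as the unique fixed point of a natural solution operator via the Banach contraction mapping theorem, using the smallness of the collision frequencies $\omega_i$ to drive both the invariance and the contractivity.

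First I would define the solution map $\Psi=(\Psi_1,\dots,\Psi_N)$ as follows: for an admissible $N$-tuple $g=(g_1,\dots,g_N)$, compute the partial densities $n_i$ and Eckart four-velocities $U_i^\mu$, then $\widetilde{U}^\mu$ via \eqref{def_U} and $\widetilde{\beta}$ via \eqref{beta identity}, producing $\mathcal{J}_i[g]$ through \eqref{Juttner2}, and set $\Psi_i(g)(x,p_i)$ equal to the right-hand side of the mild-solution formula in the Definition. The candidate domain $X$ consists of $N$-tuples of non-negative $f_i\in C([0,1];L^1(\mathbb{R}^3))$ satisfying items (1) and (2) of the theorem, equipped with the norm $\|f\|=\sum_{i=1}^N \sup_{x\in[0,1]}\int_{\mathbb{R}^3} f_i(x,p_i)\,dp_i$.

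Second, I would verify invariance $\Psi(X)\subset X$. Non-negativity is immediate from the mild formula. For the $L^1$ bounds in (1), I would integrate the mild formula against $dp_i$ and against $dp_i/p_i^0$, use $1\ge e^{-\omega_i x/|p_i^1|}\ge 1-\omega_i x/|p_i^1|$, and bound the attractor integrals $\int \mathcal{J}_i[g]\,dp_i$ in terms of $a_{i,u}$ and of the range of $\widetilde{\beta}$ forced by membership of $g$ in $X$. For small $\omega_i$ the attractor contribution is an $O(\omega_i)$ perturbation of the ballistic boundary term, so the buffer built into $a_{i,\ell}=\tfrac14 \int f_{i,LR}\,dp_i/p_i^0$ and $a_{i,u}=2\int f_{i,LR}\,dp_i$ is preserved with room to spare. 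For (2), I would substitute the resulting bounds on $\int f_i\,dp_i/p_i^0$ back into \eqref{beta identity} and use Remark~\ref{lambda1} (which gives $\lambda_i>1$, hence $\gamma<1$) to reproduce the inequality.

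Third, I would prove that $\Psi$ is a strict contraction on $X$. From the mild formula,
\[
\Psi_i(f)(x,p_i)-\Psi_i(g)(x,p_i)=\frac{\omega_i}{|p_i^1|}\int_0^1 e^{-\omega_i|x-y|/|p_i^1|}\bigl(\mathcal{J}_i[f]-\mathcal{J}_i[g]\bigr)(y,p_i)\,dy,
\]
so the explicit prefactor $\omega_i$ yields the small Lipschitz constant once the map $g\mapsto \mathcal{J}_i[g]$ is shown to be Lipschitz on $X$ uniformly. I would obtain this by expanding the difference along the chain $g\mapsto(n_i,U_i^\mu)\mapsto(\widetilde{U}^\mu,\widetilde{\beta})\mapsto \mathcal{J}_i$; the lower bound $a_{i,\ell}$ keeps the relevant denominators bounded below on $X$, and the implicit function theorem applied to \eqref{beta identity} supplies the Lipschitz dependence of $\widetilde{\beta}$ on $g$. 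The resulting estimate $\|\Psi(f)-\Psi(g)\|\le C\max_i \omega_i\cdot \|f-g\|$, with $C$ depending only on the boundary data, becomes a contraction upon choosing $\varepsilon$ with $C\varepsilon<1$, and Banach's theorem delivers a unique fixed point in $X$, which by construction satisfies (1) and (2).

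The principal obstacle I anticipate is the Lipschitz control of $\widetilde{\beta}$ through the implicit nonlinear equation \eqref{beta identity}. Since $\widetilde{\beta}$ enters $\mathcal{J}_i$ both via the $\Phi_i(\widetilde{\beta})$-type normalization integrals and via the exponent $e^{-\widetilde{\beta}\widetilde{U}^\mu p_{i\mu}}$, quantitative estimates require (a) a uniform lower bound on the $\widetilde{\beta}$-derivative of the left-hand side of \eqref{beta identity} over $X$, and (b) a uniform two-sided bound on $\widetilde{\beta}$ itself on $X$, so that $p_i$-integrals involving $e^{-c\widetilde{\beta} p_i^0}$ stay comparable. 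The secondary delicate point is the preservation of item (2) under $\Psi$, which effectively pins down the precise relation between $\varepsilon$, $\gamma$, and the $\lambda_i$.
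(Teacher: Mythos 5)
Your overall strategy coincides with the paper's: a Banach fixed-point argument on a solution space encoding exactly properties (1) and (2) of the theorem, with invariance and contractivity both driven by the smallness of the $\omega_i$, and with the two-sided bound on $\widetilde{\beta}$ (hence the exponential decay $\mathcal{J}_i\le C_1e^{-C_2 p_i^0}$ of Lemma \ref{22}) extracted from membership in the space. However, there is one concrete gap in how you claim the smallness is obtained. The Duhamel terms carry the prefactor $\omega_i/|p_i^1|$, not $\omega_i$, and this prefactor is unbounded as $p_i^1\to 0$; correspondingly, your proposed lower bound $e^{-\omega_i x/|p_i^1|}\ge 1-\omega_i x/|p_i^1|$ is vacuous for small $|p_i^1|$, and the naive estimate
\[
\frac{\omega_i}{|p_i^1|}\int_0^x e^{-\frac{\omega_i}{|p_i^1|}(x-y)}\mathcal{J}_i(y)\,dy\le \frac{\omega_i x}{|p_i^1|}\,\sup_y\mathcal{J}_i(y,p_i)
\]
cannot be integrated in $p_i$, since $\int_{0<|p_i^1|<1}|p_i^1|^{-1}\,dp_i^1$ diverges. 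The same issue undermines the sentence ``the explicit prefactor $\omega_i$ yields the small Lipschitz constant'': pointwise in $p_i$ the Duhamel kernel has total mass $1-e^{-\omega_i x/|p_i^1|}$, which is close to $1$ (not to $0$) on the set where $|p_i^1|$ is of order $\omega_i$ or smaller.

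The missing ingredient is the paper's Lemma \ref{phi2}: after reducing to a one-dimensional integral in $p_i^1$ via the exponential decay in $(p_i^2,p_i^3)$, one must split the $p_i^1$-domain into $(0,\omega_i]$, $(\omega_i,1/\omega_i]$, and $(1/\omega_i,\infty)$, using the trivial bound $1-e^{-u}\le 1$ on the first region (whose measure is $O(\omega_i)$) and $1-e^{-u}\le u$ only on the other two. The resulting bound is $C\bigl(\omega_i\ln\frac{1}{\omega_i}+\omega_i+\omega_i^2e^{-C/\omega_i}\bigr)$ --- note the logarithm, so the perturbation is $O(\omega_i\ln\frac{1}{\omega_i})$ rather than $O(\omega_i)$ --- which still tends to $0$ and suffices for both the invariance of the set and the contraction estimate. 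With this repaired, the rest of your outline (two-sided bounds on $\widetilde{\beta}$, Lipschitz dependence of $(\widetilde{U},\widetilde{\beta})$ and hence of $\mathcal{J}_i$ on $f$ via the mean value theorem, and the role of $\lambda_i>1$ from Remark \ref{lambda1} in closing property (2)) matches the paper's proof in Sections 3 and 4.
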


The proof of Theorem \ref{main} relies on the Banach fixed point approach developed in \cite{BY}, so the main ingredient of the proof is to construct the solution space $\Omega$ appropriately. For a mild form of \eqref{SRBGK} to be a proper solution operator, i.e. contractive and it maps $\Omega$ into $\Omega$, it is necessary to adopt some suitable solution space to control the attractor  $\mathcal{J}_i$ of \eqref{Juttner2}. The difficulty of this work arises here since the auxiliary parameters $\widetilde{\beta}$ is defined by a one-to-one correspondence via the highly nonlinear relation \eqref{beta identity}:
$$
\sum_{i=1}^{N}\frac{m_i}{\tau_i}\Phi_i(\widetilde{\beta})\int_{\mathbb{R}^3}f_i \,\frac{dp_i}{p_i^0}=\frac{1}{c}\left[\left( \sum_{i=1}^{N}\frac{m_i }{\tau_i}n_iU_i^\mu\right)\left( \sum_{j=1}^{N}\frac{m_j }{\tau_j}n_jU_{j\mu}\right)\right]^{\frac{1}{2}},
$$
see \cite[Proposition 4.2]{HLY} for details. To obtain a bound of $\widetilde{\beta}$, a careful and delicate constraint is required on the solution space since $\Phi_i$ is entangled with the sum of moments of $f_i$ $(i=1,\cdots,N)$ and the constraint must be closed in our solution map. In fact, the trivial lower bound of $\Phi_i(\widetilde{\beta})$, that is $cm_i$, do not give any information on $\widetilde{\beta}$ since $\Phi_i^{-1}(y)$ goes to $\infty$ as $y$ approaches $cm_i$. To overcome this difficulty, we impose the constraint on the solution space using $\lambda_i$ given in \eqref{lambda}, which enables us to close our argument. Details can be found in Lemma \ref{11} and Lemma \ref{222}.

 We also briefly review the relevant results on the stationary problems of the BGK-type models. For the original BGK model, the existence of stationary solutions of the slab problem was established in \cite{Ukai} where the authors used a Schauder type fixed point theorem. In \cite{Nouri}, the existence of weak solutions for the stationary quantum BGK model in a slab was studied with a discretized condensation term. The authors of \cite{BY} obtained the existence of the unique mild solution to the stationary problem in a slab for ES-BGK model by using the Banach fixed point theorem. This argument was widely adapted to various other BGK-type models. We refer to \cite{HY3,HY4} for the relativistic single-species gases, \cite{BaeY} for the quantum gases, and \cite{KLY} for chemically reacting gases.

\subsection{Outline} The paper is organized as follows. In Section 2, we introduce the solution space and several estimates for the Banach fixed-point framework. In Section 3, we present our solution operator which maps the solution space into itself. Finally, Section 4 is devoted to completing the proof of Theorem \ref{main} by showing that the solution operator is contractive under our assumptions.

\section{Solution space and preliminary estimates}

Let us define the solution space $\Omega$  by
$$
\left\{f= \left(f_1(x,p_1),\cdots,f_N(x,p_N) \right)\in  \prod_{i=1}^N L^1([0,1]\times \mathbb{R}^3)~ \Big|~ f~ \text{satisfies}~ (\mathcal{A})\ \mbox{and}\ (\mathcal{B}) \right\}
$$
in which the properties $(\mathcal{A})$ and $(\mathcal{B})$ are given as follows:
\begin{enumerate}
	\item[$(\mathcal{A})$] Each $f_i$  satisfies 
	$$ f_i\ge 0,\qquad a_{i,\ell}\le \int_{\mathbb{R}^3} f_i\, \frac{dp_i}{p_i^0},\qquad \int_{\mathbb{R}^3} f_i \, dp_i\le a_{i,u}.$$
	\item[$(\mathcal{B})$] Let $\widetilde\beta$ be determined uniquely by $f$ in the relation \eqref{beta identity}. Then $\widetilde{\beta}$ satisfies
	$$  \gamma\sum_{i=1}^{N}cm_i\omega_i \lambda_i  \int_{\mathbb{R}^3} f_{i,LR}\,\frac{dp_i}{p_i^0}\le \sum_{i=1}^{N}\omega_i \Phi_i(\widetilde{\beta}) \sqrt{\lambda_i} \int_{\mathbb{R}^3} f_{i,LR}\,\frac{dp_i}{p_i^0}.
	$$
\end{enumerate}
Here  $\Omega$ is a closed subspace of the Banach space $\prod_{i=1}^N L^1([0,1]\times \mathbb{R}_{p_i}^3) $. Throughout this paper, we choice the norm as a sum of the usual $L^1$-norms:
$$
\|f\|:= \sum_{i=1}^N \|f_i\|_{L^1}.
$$
 In the rest of this section, we present preliminary estimates to set up the fixed-point argument.

\begin{lemma}\label{11}
Let $f=(f_1,\cdots,f_N)$ be an element of the solution space $\Omega$. Then $\widetilde{U}$ and $\widetilde{\beta}$, defined by $f$ in the relations \eqref{def_U} and \eqref{beta identity}, are bounded  as 
\begin{align*}
|\widetilde{U}|\le \frac{\underset{i=1,\cdots,N}{\max}{a_{i,u}}}{\underset{i=1,\cdots,N}{\min}{m_ia_{i,\ell}}},\qquad
\underset{i=1,\cdots,N}{\min}\Phi_i^{-1}(2a_{i,u}/a_{i,\ell})\leq\widetilde{\beta}\leq \underset{i=1,\cdots,N}{\max}\Phi_i^{-1}(\gamma cm_i \sqrt{\lambda_i}).
\end{align*}
For simplicity, we denote these three bounds by $\widetilde{U}_u,\widetilde{\beta}_\ell$ and $\widetilde{\beta}_u$, respectively.
\end{lemma}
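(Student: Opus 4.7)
The plan is to establish the three bounds separately, since each uses a different feature of the problem. For $|\widetilde U|$, I would work from the definition \eqref{def_U} by setting $V^\mu := \sum_{i=1}^N \omega_i n_i U_i^\mu$, so that $\widetilde U^\mu = cV^\mu/\sqrt{V^\nu V_\nu}$, and the constraint $\widetilde U^\mu\widetilde U_\mu=c^2$ yields $|\widetilde U| = c|V|/\sqrt{V^\nu V_\nu}$. The numerator satisfies $|V|\le \sum_i \omega_i n_i|U_i|\le \sum_i \omega_i n_i U_i^0 = c\sum_i \omega_i \int f_i\,dp_i \le c\sum_i \omega_i a_{i,u}$, using $|U_i|\le U_i^0$ (from $U_i^\mu U_{i\mu}=c^2$) together with $(\mathcal A)$. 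For the denominator, the inequality $U_i^\mu U_{j\mu}\ge c^2$, valid for any two future-directed four-vectors of length $c$ and provable by a short Cauchy--Schwarz check on $(c^2+|U_i|^2)(c^2+|U_j|^2)\ge (c^2+|U_i||U_j|)^2$, gives $V^\nu V_\nu = \sum_{i,j}\omega_i\omega_j n_i n_j U_i^\mu U_{j\mu}\ge c^2(\sum_i\omega_i n_i)^2$. Finally, the Lorentz-scalar inequality $p_i^\mu U_{i\mu}\ge c^2 m_i$ (obvious in the rest frame of $U_i$) combined with $(\mathcal A)$ produces $n_i = c^{-1}\int p_i^\mu U_{i\mu}\,f_i\,dp_i/p_i^0\ge cm_i\int f_i\,dp_i/p_i^0\ge cm_ia_{i,\ell}$. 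Assembling these gives $|\widetilde U|\le (\sum_i\omega_i a_{i,u})/(\sum_i\omega_i m_ia_{i,\ell})$, which is dominated by $\max_i a_{i,u}/\min_i m_ia_{i,\ell}$ after pulling the $\max$ and $\min$ out of the sums.

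The two bounds on $\widetilde\beta$ then follow a common pattern: extract a sum inequality, apply a pigeonhole argument to obtain a pointwise bound on $\Phi_i(\widetilde\beta)$, and invert via monotonicity. A brief derivative calculation (differentiate $\Phi_i$ and apply Cauchy--Schwarz to the numerator) shows $\Phi_i$ is strictly decreasing on $(0,\infty)$ with range $(cm_i,\infty)$, so $\Phi_i^{-1}$ is well defined and decreasing on $(cm_i,\infty)$. For the upper bound, condition $(\mathcal B)$ is of the form $\sum_i X_i\le \sum_i Y_i(\widetilde\beta)$ with $X_i,Y_i\ge 0$, so some index $i_0$ satisfies $X_{i_0}\le Y_{i_0}(\widetilde\beta)$; cancelling the shared factor $\omega_{i_0}\sqrt{\lambda_{i_0}}\int f_{i_0,LR}\,dp_{i_0}/p_{i_0}^0$ leaves $\gamma cm_{i_0}\sqrt{\lambda_{i_0}}\le \Phi_{i_0}(\widetilde\beta)$, which inverts to $\widetilde\beta\le \Phi_{i_0}^{-1}(\gamma cm_{i_0}\sqrt{\lambda_{i_0}})\le \max_i\Phi_i^{-1}(\gamma cm_i\sqrt{\lambda_i})$. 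For the lower bound, I would estimate the right-hand side of \eqref{beta identity} using $V^\nu V_\nu\le (V^0)^2$ to get $\sum_i\omega_i\Phi_i(\widetilde\beta)\int f_i\,dp_i/p_i^0 \le c^{-1}V^0 = \sum_i\omega_i\int f_i\,dp_i\le \sum_i\omega_i a_{i,u}$; inserting $\int f_i\,dp_i/p_i^0\ge a_{i,\ell}$ and applying pigeonhole yields some $i_1$ with $\Phi_{i_1}(\widetilde\beta)\le a_{i_1,u}/a_{i_1,\ell}\le 2a_{i_1,u}/a_{i_1,\ell}$, whence $\widetilde\beta\ge \Phi_{i_1}^{-1}(2a_{i_1,u}/a_{i_1,\ell})\ge \min_i\Phi_i^{-1}(2a_{i,u}/a_{i,\ell})$ (the factor $2$ is a harmless enlargement absorbing the inefficiency of using $V^\nu V_\nu\le (V^0)^2$).

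The main obstacle will be the $|\widetilde U|$ estimate, because it demands assembling three a priori unrelated bounds---an upper one on $|V|$, a lower one on $V^\nu V_\nu$, and a lower one on each $n_i$---whose compatibility hinges on the two covariant inequalities $U_i^\mu U_{j\mu}\ge c^2$ and $p_i^\mu U_{i\mu}\ge c^2 m_i$. Once those scalar inequalities are in hand the remaining manipulations are routine. The two $\widetilde\beta$ bounds, by contrast, reduce to a pigeonhole plus monotonicity of $\Phi_i$ applied to two prepackaged sum inequalities (namely $(\mathcal B)$ and \eqref{beta identity}), and should be essentially mechanical.
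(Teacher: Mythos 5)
Your proposal is correct and follows essentially the same route as the paper: the reverse Cauchy--Schwarz inequality for future-directed timelike vectors to control the Minkowski norm of $\sum_i\omega_in_iU_i^\mu$ from below and above, and then a pigeonhole argument combined with the strict monotonicity of $\Phi_i$ (with range $(cm_i,\infty)$) to invert the two sum inequalities coming from $(\mathcal{B})$ and \eqref{beta identity}. The only cosmetic difference is that for the lower bound on $\sqrt{V^\mu V_\mu}$ the paper works at the kinetic level via $p_i^\mu p_{j\mu}\ge m_im_jc^2$, whereas you work at the hydrodynamic level via $U_i^\mu U_{j\mu}\ge c^2$ together with $n_i\ge cm_i a_{i,\ell}$ --- which is precisely the variant the paper itself uses later in Lemma \ref{222}, and it yields the identical bound.
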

\begin{proof}
$\bullet$ (Estimate of $\widetilde U$): By definition, 
\begin{align*}
\left( \sum_{i=1}^{N}\omega_i n_iU_i^\mu\right)\left( \sum_{j=1}^{N}\omega_jn_jU_{j\mu}\right)&=c^2\int_{\mathbb{R}^3}\int_{\mathbb{R}^3} \left(\sum_{i=1}^{N}\omega_i p^\mu_if_i\right)\left(\sum_{j=1}^{N}\omega_j p_{j\mu}f_j\right) \,\frac{dp_i}{p_i^0}\,\frac{dp_j}{p_j^0}.
\end{align*}
Applying the Cauchy-Schwarz inequality, we see that
$$
p_i^\mu p_{j\mu}=\sqrt{(m_ic)^2+|p_i|^2}\sqrt{(m_jc)^2+|p_j|^2}-p_i\cdot p_j\ge m_im_jc^2,
$$
which gives
\begin{align}\label{lowerbound}\begin{split}
\left( \sum_{i=1}^{N}\omega_i n_iU_i^\mu\right)\left( \sum_{j=1}^{N}\omega_jn_jU_{j\mu}\right)&\ge c^2\int_{\mathbb{R}^3}\int_{\mathbb{R}^3} \left(\sum_{i=1}^{N}\omega_i m_if_i\right)\left(\sum_{j=1}^{N}\omega_j m_jf_j\right) \,\frac{dp_i}{p_i^0}\,\frac{dp_j}{p_j^0}\cr
&=\left(c\sum_{i=1}^{N}\omega_i m_i \int_{\mathbb{R}^3}  f_i  \,\frac{dp_i}{p_i^0} \right)^2.
\end{split}\end{align}
Thus, we have from $(\mathcal{A})$ that 
\begin{align*}
|\widetilde{U}| &= c\frac{\left|\sum_{i=1}^{N}\omega_in_iU_i\right|}{\sqrt{\left( \sum_{i=1}^{N}\omega_in_iU_i^\mu\right)\left( \sum_{j=1}^{N}\omega_jn_jU_{j\mu}\right)}}\cr
&\le \frac{\sum_{i=1}^{N}\omega_i \int_{\mathbb{R}^3}\frac{|p_i|}{p_i^0}f_i \, dp_i}{\sum_{i=1}^{N}\omega_i m_i \int_{\mathbb{R}^3}  f_i  \,\frac{dp_i}{p_i^0}}\cr
&\le \frac{\underset{i=1,\cdots,N}{\max}{a_{i,u}}}{\underset{i=1,\cdots,N}{\min}{m_ia_{i,\ell}}}
\end{align*}
$\bullet$ (Estimate of $\widetilde \beta$): By $(\mathcal{B})$, we know the following inequality holds:
\begin{equation*}
\gamma	cm_i\lambda_i  \le \Phi_i (\widetilde{\beta} ) \sqrt{\lambda_i}
\end{equation*}
for at least one $i\in \{1,\cdots, N\} $. We recall from   \cite[Appendix]{BCNS} that  $\Phi_i$ is strictly decreasing on $\widetilde{\beta}\in(0,\infty)$ and its range is $(cm_i,\infty)$. Here, due to H\"{o}lder's inequality, $\lambda_i$ is strictly bigger than $1$  and hence the following holds:
$$
\frac{1}{\sqrt{\lambda_i}}\gamma cm_i\lambda_i\geq cm_i\lambda_i^{1/4}>cm_i.
$$
This gives that $\Phi_i^{-1}(\gamma cm_i \sqrt{\lambda_i})$ is well-defined and $\widetilde{\beta}$ is bounded above as
$$
\widetilde{\beta}\leq\Phi_i^{-1}(\gamma cm_i\sqrt{\lambda_i}).
$$
Taking maximum over $i$, we obtain
$$
\widetilde{\beta}\leq \widetilde{\beta}_u:=\underset{i=1,\cdots,N}{\max}\Phi_i^{-1}(\gamma cm_i\sqrt{\lambda_i}).
$$
For the lower bound of $\widetilde{\beta}$, we observe that
\begin{align}\label{upperbound}
\begin{split}
&\left(\sum_{i=1}^{N}\omega_i n_iU_i^\mu \right)\left(\sum_{j=1}^{N}\omega_j n_jU_{j\mu}\right)\cr
&=c^2\sum_{i=1}^{N}\sum_{j=1}^{N}\omega_i\omega_j\left( \int_{\mathbb{R}^3}p^0_if_i \,\frac{dp_i}{p_i^0}\int_{\mathbb{R}^3}p^0_jf_j \,\frac{dp_j}{p_j^0} -\sum_{k=1}^3\int_{\mathbb{R}^3}p^k_if_i \,\frac{dp_i}{p_i^0}\cdot \int_{\mathbb{R}^3}p^k_jf_j \,\frac{dp_j}{p_j^0}\right)\cr
&\le 4c^2\sum_{i=1}^{N}\sum_{j=1}^{N}\omega_i\omega_j\int_{\mathbb{R}^3}p^0_if_i \,\frac{dp_i}{p_i^0}\int_{\mathbb{R}^3}p^0_jf_j \,\frac{dp_j}{p_j^0}\cr
&=\left(2c\sum_{i=1}^{N}\omega_i \int_{\mathbb{R}^3}f_i \,dp_i\right)^2.
\end{split}
\end{align}
This, together with \eqref{beta identity}, gives that
\begin{align*}
\sum_{i=1}^{N}\omega_i\Phi_i(\widetilde{\beta}) a_{i,\ell}\le	\sum_{i=1}^{N}\omega_i\Phi_i(\widetilde{\beta})\int_{\mathbb{R}^3}f_i \,\frac{dp_i}{p_i^0}&=\frac{1}{c}\left[\left( \sum_{i=1}^{N}\omega_in_iU_i^\mu\right)\left( \sum_{j=1}^{N}\omega_jn_jU_{j\mu}\right)\right]^{\frac{1}{2}}		\le 2\sum_{i=1}^{N}\omega_i a_{i,u}. 
\end{align*}
This implies that for at least one $j\in\{1,\cdots, N\}$, the following holds:
\begin{align}\label{beta_l}
	\Phi_j(\widetilde{\beta})\leq\frac{2a_{j,u}}{a_{j,\ell}}.	
\end{align}
Since the right hand side of \eqref{beta_l} is bigger than $16cm_i$ by definition, we can take the inverse $\Phi^{-1}$ on both sides to obtain 
\begin{align*}
\min_{j=1,\cdots, N}	\Phi_j^{-1}\left(\frac{2a_{j,u}}{a_{j,\ell}}\right)=:\widetilde{\beta}_\ell\leq\widetilde{\beta},
\end{align*}
which completes the proof.
\end{proof}	
\begin{lemma}\label{22}
Let $f=(f_1,\cdots,f_N)$ be an element of the solution space $\Omega$.  Then, there exist positive constants $C_{1}$ and $C_2$ depending on the quantities given in \eqref{lambda} such that 
$$
\mathcal{J}_i   \le C_1e^{-C_2p_i^0}\qquad \text{for all $i=1,\cdots,N$},
$$
where $\mathcal{J}_i$ is given as \eqref{Juttner2}.
\end{lemma}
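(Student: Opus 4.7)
The plan is to treat $\mathcal{J}_i$ as a product of three factors and bound each using Lemma \ref{11}. Writing
\[
\mathcal{J}_i = \frac{\int_{\mathbb{R}^3} f_i \,\frac{dp_i}{p_i^0}}{\int_{\mathbb{R}^3} e^{-c\widetilde{\beta} p_i^0}\, \frac{dp_i}{p_i^0}}\, e^{-\widetilde{\beta} \widetilde{U}^\mu p_{i\mu}},
\]
I need a uniform upper bound on the numerator, a uniform lower bound on the denominator, and an exponential upper bound on the J\"uttner factor. The numerator is straightforward: since $p_i^0 \ge m_i c$, property $(\mathcal{A})$ gives $\int f_i \,\tfrac{dp_i}{p_i^0} \le a_{i,u}/(m_i c)$. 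For the denominator, the upper bound $\widetilde{\beta} \le \widetilde{\beta}_u$ from Lemma \ref{11} yields $e^{-c\widetilde{\beta} p_i^0} \ge e^{-c\widetilde{\beta}_u p_i^0}$, so the denominator is bounded below by a positive constant depending only on the quantities in \eqref{lambda}.

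The heart of the argument is the exponential factor, where I need a lower bound of the form $\widetilde{U}^\mu p_{i\mu} \ge K p_i^0$ for some $K>0$. Using $|p_i|\le p_i^0$ and $\widetilde{U}^0=\sqrt{c^2+|\widetilde{U}|^2}$, I would estimate
\[
\widetilde{U}^\mu p_{i\mu} = \widetilde{U}^0 p_i^0 - \widetilde{U}\cdot p_i \ge \bigl(\widetilde{U}^0-|\widetilde{U}|\bigr)p_i^0 = \frac{c^2\, p_i^0}{\sqrt{c^2+|\widetilde{U}|^2}+|\widetilde{U}|}.
\]
Since the map $r\mapsto c^2/(\sqrt{c^2+r^2}+r)$ is decreasing, the bound $|\widetilde{U}|\le \widetilde{U}_u$ from Lemma \ref{11} gives $\widetilde{U}^\mu p_{i\mu} \ge K p_i^0$ with $K := c^2/(\sqrt{c^2+\widetilde{U}_u^2}+\widetilde{U}_u) > 0$. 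Combining this with the positive lower bound $\widetilde{\beta}\ge \widetilde{\beta}_\ell$ from Lemma \ref{11}, the exponential factor is dominated by $e^{-\widetilde{\beta}_\ell K p_i^0}$, and the conclusion follows upon setting $C_2 := \widetilde{\beta}_\ell K$ and taking $C_1$ to be the maximum over $i$ of the ratios produced in the previous step.

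The main obstacle is extracting genuine exponential decay in $p_i^0$, which requires both bounds from Lemma \ref{11} to be used in tandem: without the strict positivity $\widetilde{\beta}_\ell>0$ the decay rate collapses, and without the upper bound $|\widetilde{U}|\le \widetilde{U}_u$ the coefficient $K$ in the exponent could vanish. Property $(\mathcal{B})$ of the solution space was engineered precisely to secure the former, and this is what makes this Gaussian-type control on $\mathcal{J}_i$ available inside the fixed-point scheme.
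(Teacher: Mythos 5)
Your proposal is correct and follows essentially the same route as the paper: bound the prefactor using $(\mathcal{A})$ and $\widetilde{\beta}\le\widetilde{\beta}_u$, and bound the exponent below by $\widetilde{\beta}_\ell\bigl(\sqrt{c^2+\widetilde{U}_u^2}-\widetilde{U}_u\bigr)p_i^0$ via $|p_i|\le p_i^0$ and the monotonicity of $r\mapsto\sqrt{c^2+r^2}-r$, which is exactly the paper's $C_2$. One small correction to your closing remark: in Lemma \ref{11} it is property $(\mathcal{B})$ that yields the \emph{upper} bound $\widetilde{\beta}_u$ (needed for the denominator), while the lower bound $\widetilde{\beta}_\ell$ comes from $(\mathcal{A})$ combined with the defining relation \eqref{beta identity}.
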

\begin{proof} It follows from Lemma \ref{11} that 
$$
0\le \frac{\int_{\mathbb{R}^3} f_i \frac{dp_i}{p_i^0}}{\int_{\mathbb{R}^3} e^{-c\widetilde{\beta} p_i^0} \frac{dp_i}{p_i^0}} \le \frac{a_{i,u}}{\int_{\mathbb{R}^3} e^{-c\widetilde{\beta}_u p_i^0} \frac{dp_i}{p_i^0}}=:C_1.
$$
Also, one can obtain
\begin{align*}\begin{split}
		\widetilde{\beta}\widetilde{U}^\mu p_{i \mu }&=\widetilde{\beta}\left(\sqrt{c^2+|\widetilde{U}|^2}p_i^0-\widetilde{U}\cdot p_i\right)\cr
		&\ge \widetilde{\beta}\left(\sqrt{c^2+|\widetilde{U}|^2}-|\widetilde{U}|\right)p_i^0\cr
		&\ge \widetilde{\beta}_\ell\left(\sqrt{c^2+\widetilde{U}_u^2}-\widetilde{U}_u\right)p_i^0\cr
		&=:C_2p_i^0,
	\end{split}
\end{align*}
where we used the fact that $\sqrt{c^2+x^2}-x$ is decreasing on $x\in [0,\infty)$.
Therefore we conclude that
$$
\mathcal{J}_i=\frac{\int_{\mathbb{R}^3} f_i \frac{dp_i}{p_i^0}}{\int_{\mathbb{R}^3} e^{-c\widetilde{\beta} p_i^0} \frac{dp_i}{p_i^0}}e^{-\widetilde{\beta}\widetilde{U}^\mu p_{i \mu }} \le  C_1e^{-C_2p_i^0}.
$$
\end{proof}
\begin{lemma}\label{phi2}
Let $f=(f_1,\cdots,f_N)$ be an element of the solution space $\Omega$. Assume that $\omega_i\in (0,1)$ for every $i\in\{1,\cdots,N\}$. We then have
\begin{align*}
& \int_{p_i^1>0}\frac{\omega_i}{|p_i^1|}\int_0^xe^{-\frac{\omega_i}{|p_i^1|}(x-y)}\mathcal{J}_i(y)\, dydp_i+\int_{p_i^1<0}\frac{\omega_i}{|p_i^1|}\int_x^1e^{-\frac{\omega_i}{|p_i^1|}(y-x)}\mathcal{J}_i(y)\,dydp_i\cr
&\le C\left(\omega_i\ln\frac{1}{\omega_i}+\omega_i+\omega_i^2e^{-\frac{C}{\omega_i}}\right).
\end{align*}
\end{lemma}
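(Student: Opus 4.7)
The estimate reduces to a one-dimensional calculation in $|p_i^1|$ once the attractor $\mathcal{J}_i$ is controlled pointwise. First, I apply Lemma~\ref{22} to replace $\mathcal{J}_i(y,p_i)$ by the uniform majorant $C_1 e^{-C_2 p_i^0}$. Since this bound does not depend on $y$, the inner integrals in $y$ become explicit:
$$\frac{\omega_i}{|p_i^1|}\int_0^x e^{-\omega_i(x-y)/|p_i^1|}\,dy \;=\; 1 - e^{-\omega_i x/|p_i^1|}\;\le\; 1 - e^{-\omega_i/|p_i^1|},$$
and the second term is handled identically after the substitution $p_i^1\mapsto -p_i^1$ (which leaves both $p_i^0$ and $|p_i^1|$ invariant). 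It therefore suffices to estimate
$$\mathcal{I}(\omega_i):=\int_{\mathbb{R}^3}\bigl(1-e^{-\omega_i/|p_i^1|}\bigr)\,e^{-C_2 p_i^0}\,dp_i.$$

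Next I integrate out the transverse components $(p_i^2,p_i^3)$, obtaining
$$\mathcal{I}(\omega_i) \;=\; 2\int_0^\infty \bigl(1-e^{-\omega_i/s}\bigr)\,G(s)\,ds, \qquad G(s):=\int_{\mathbb{R}^2} e^{-C_2\sqrt{(m_ic)^2+s^2+|q|^2}}\,dq,$$
where $G$ is bounded on $[0,\infty)$ and decays exponentially as $s\to\infty$. I then split the $s$-integral at $s=\omega_i$. On $\{s>\omega_i\}$ the elementary bound $1-e^{-z}\le z$ with $z=\omega_i/s\in(0,1)$ reduces the integrand to $\omega_i\,G(s)/s$; splitting once more at $s=1$ produces a contribution of order $\omega_i\ln(1/\omega_i)$ from $[\omega_i,1]$ (the logarithmic singularity of $1/s$ at the origin) and a contribution of order $\omega_i$ from $[1,\infty)$ via the exponential decay of $G$. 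On $\{s\le\omega_i\}$, I keep the factor $1-e^{-\omega_i/s}$ intact and split it into two pieces: the constant $1$ contributes $\int_0^{\omega_i}G(s)\,ds = O(\omega_i)$, while $\int_0^{\omega_i} e^{-\omega_i/s}G(s)\,ds$, handled via the change of variable $u=\omega_i/s$ and the exponential tail of $G$ at $s=\omega_i$, produces the residual correction of the form $\omega_i^2 e^{-C/\omega_i}$ appearing in the statement.

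\textbf{Main obstacle.} The essential feature is the logarithmic singularity of $1/s$ at the origin, which forces the split at $s=\omega_i$ and is the source of the leading $\omega_i\ln(1/\omega_i)$ term. The other contributions are routine one-dimensional bounds, with the small $\omega_i^2 e^{-C/\omega_i}$ remainder amounting to careful bookkeeping on $\{s\le\omega_i\}$ rather than a conceptual difficulty.
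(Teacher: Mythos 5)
Your proposal follows essentially the same route as the paper: majorize $\mathcal{J}_i$ by $C_1e^{-C_2p_i^0}$ via Lemma~\ref{22}, compute the $y$-integral explicitly, integrate out the transverse momenta to reduce to a one-dimensional integral in $s=|p_i^1|$, and split that integral using $1-e^{-z}\le z$ away from $s=0$ to extract the $\omega_i\ln(1/\omega_i)$ term. The only substantive difference is the choice of split points: you cut at $s=\omega_i$ and $s=1$, whereas the paper cuts at $s=\omega_i$ and $s=1/\omega_i$; your version actually yields the slightly cleaner bound $C(\omega_i\ln\frac{1}{\omega_i}+\omega_i)$, since on $[1,\infty)$ the crude estimate $1/s\le 1$ together with the exponential decay of $G$ already gives $O(\omega_i)$, and the target bound is only larger. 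One inaccuracy in your bookkeeping: the claim that $\int_0^{\omega_i}e^{-\omega_i/s}G(s)\,ds$ is of size $\omega_i^2e^{-C/\omega_i}$ is false (the substitution $u=\omega_i/s$ shows it is $O(\omega_i)$, since $G(\omega_i/u)$ is merely bounded, not exponentially small, near $s=\omega_i$). This is harmless because that integral enters with a minus sign, so you may simply drop it and bound the whole region $\{s\le\omega_i\}$ by $\int_0^{\omega_i}G(s)\,ds\le C\omega_i$; the paper's $\omega_i^2e^{-C/\omega_i}$ term instead originates from its third region $s>1/\omega_i$, where $1/s\le\omega_i$ combines with $\int_{1/\omega_i}^{\infty}e^{-C_3s}\,ds$.
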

\begin{proof} Due to the symmetry, it  is sufficient to prove only the case $p_i^1>0$. It is straightforward that
\begin{align*}
	\int_{\mathbb{R}^2} e^{-C_2\sqrt{(cm_i)^2+|p_i|^2}}\,dp_i^2dp_i^3&\le \int_{\mathbb{R}^2} e^{-C_3(|p_i^1|+|p_i^2|+|p_i^3|)}\,dp_i^2 dp_i^3 \cr
	&=e^{-C_3|p_i^1|}\int_{\mathbb{R}^2} e^{-C_3(|p_i^2|+|p_i^3|)}\,dp_i^2dp_i^3\cr
	&=\frac{4}{C_3^2}e^{-C_3|p_i^1|}\end{align*}
where we used the elementary inequality in the first line,
\begin{align*}
	\sqrt{C+|x|^2+|y|^2+|z|^2}\geq \frac{|x|+|y|+|z|}{\sqrt{3}},
\end{align*}	 
and we denote $C_3$ as $C_2/\sqrt{3}$.
This, together with Lemma \ref{22}, gives that
\begin{align}\label{p1>0}
\begin{split}
\int_{p_i^1>0}\frac{\omega_i}{p_i^1}\int_0^xe^{-\frac{\omega_i}{p_i^1}(x-y)}\mathcal{J}_i(y)\,dydp_i&\le C_1\int_{p_i^1>0}\frac{\omega_i}{p_i^1}\int_0^xe^{-\frac{\omega_i}{p_i^1}(x-y)}e^{-C_2\sqrt{(cm_i)^2+|p_i|^2}}\,dydp_i\cr&\le \frac{4C_1}{C_3^2}\int_{p_i^1>0}\frac{\omega_i}{p_i^1}\int_0^xe^{-\frac{\omega_i}{p_i^1}(x-y)}e^{-C_3|p_i^1|}\,dydp_i^1\\
&= \frac{4C_1}{C_3^2}\int_{p_i^1>0}\big(1-e^{-\frac{\omega_i}{p_i^1}x}\big)e^{-C_3|p_i^1|}\,dp_i^1.
\end{split}
\end{align}
We split the domain into three cases $p_i^1\in (0,\omega_i]$, $p_i^1\in (\omega_i,1/\omega_i]$, and $p_i^1\in (1/\omega_i,\infty)$ and estimate them separately:
\begin{align*}
	\int_{p_i^1>0}\big(1-e^{-\frac{\omega_i}{p_i^1}x}\big)e^{-C_3|p_i^1|}\,dp_i^1=\left\{\int_{p_i^1<\omega_i}+\int_{\omega_i\leq p_i^1<1/\omega_i}+\int_{1/\omega_i\leq p_i^1}\right\}\big(1-e^{-\frac{\omega_i}{p_i^1}x}\big)e^{-C_3|p_i^1|}\,dp_i^1.
\end{align*}
\noindent\newline
$\bullet$ (Case of $p_i^1\in (0,\omega_i]$): For the first case, we can easily obtain 
\begin{equation}\label{I1}
 \int_{0<p_i^1\le \omega_i} e^{-C_3|p_i^1|}\big(1-e^{-\frac{\omega_i}{p_i^1}x}\big)\,dp_i^1\le 	\int_{0<p_i^1\le \omega_i}\,dp_i^1\le \omega_i.
\end{equation}
\noindent\newline
$\bullet$ (Case of $p_i^1\in (\omega_i,1/\omega_i]$): We use the inequality
\begin{equation}\label{inequality}
1-e^{-x}\le x\quad \mbox{for all}\ x\in [0,\infty)
\end{equation}
to see that
\begin{align}\label{I2}
 \int_{\omega_i<p_i^1\le \frac{1}{\omega_i}} e^{-C_3|p_i^1|}\big(1-e^{-\frac{\omega_i}{p_i^1}x}\big)\,dp_i^1&\le \int_{\omega_i<p_i^1\le\frac{1}{\omega_i}} e^{-C_3|p_i^1|}\frac{\omega_i}{p_i^1}x\, dp_i^1\cr 
&\le \int_{\omega_i<p_i^1\le\frac{1}{\omega_i}}\frac{\omega_i}{p_i^1}dp_i^1\cr
&=2\omega_i\ln \frac{1}{\omega_i}.
\end{align}
$\bullet$ (Case of $p_i^1\in (1/\omega_i,\infty)$): It follows from \eqref{inequality} that 
\begin{align}\label{II}
\begin{split}
\int_{p_i^1>\frac{1}{\omega_i}}\big(1-e^{-\frac{\omega_i}{p_i^1}x}\big)e^{-C_3|p_i^1|}\,dp_i^1
& \le \int_{p_i^1>\frac{1}{\omega_i}} \frac{\omega_i}{p_i^1}x e^{-C_3|p_i^1|}\,dp_i^1\cr 
&\le \omega_i^2 \int_{p_i^1>\frac{1}{\omega_i}}e^{-C_3|p_i^1|}dp_i^1\cr
&= \frac{\omega_i^2}{C_3}e^{-\frac{C_3}{\omega_i}}.
\end{split}
\end{align}
Going back to \eqref{p1>0} with \eqref{I1}, \eqref{I2} and \eqref{II}, we get the desired result:
\begin{equation*}
\int_{p_i^1>0}\frac{\omega_i}{p_i^1}\int_0^xe^{-\frac{\omega_i}{p_i^1}(x-y)}\mathcal{J}_i(y)\,dydp_i\le C\left(\omega_i\ln\frac{1}{\omega_i}+\omega_i+\omega_i^2e^{-\frac{C}{\omega_i}}\right).
\end{equation*}
\end{proof}
\section{Solution operator mapping into itself}
Let us define the solution operator $\Psi=(\Psi_1,\cdots,\Psi_N)$ by
\begin{align*}
	\Psi_i(f)(x,p_i)&=\left(e^{-\frac{\omega_i}{|p_i^1|}x}f_{i,L}+\frac{\omega_i}{|p_i^1|}\int_0^xe^{-\frac{\omega_i}{|p_i^1|}(x-y)}\mathcal{J}_i(y)\, dy\right)\bold{1}_{p_i^1>0}\cr
	&+\left(e^{-\frac{\omega_i}{|p_i^1|}(1-x)}f_{i,R}+\frac{\omega_i}{|p_i^1|}\int_x^1e^{-\frac{\omega_i}{|p_i^1|}(y-x)}\mathcal{J}_i(y)\,dy\right)\bold{1}_{p_i^1<0}.
\end{align*}

The aim of this section is to show that the operator $\Psi$ maps the solution space $\Omega$ into itself, i.e., $\Psi(f)$ satisfies $(\mathcal{A})$ and $(\mathcal{B})$ for $f\in \Omega$. 
The proof consists of the following two lemmas.
\begin{lemma} Let $f=(f_1,\cdots,f_N)$ be an element of the solution space $\Omega$. Then, for sufficiently small $\omega_i$, we have
$$
\Psi_i(f)\ge 0,\quad \int_{\mathbb{R}^3} \Psi_i(f)\,\frac{dp_i}{p_i^0}\ge a_{i,\ell},\quad \int_{\mathbb{R}^3}\Psi_i(f)\,dp_i\le a_{i,u}
$$
for every $i\in \{1,\cdots,N\}$.
\end{lemma}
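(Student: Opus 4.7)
The plan is to split $\Psi_i(f)$ into a boundary part (the $f_{i,L}$ and $f_{i,R}$ terms weighted by decaying exponentials) and a gain part (the integral involving $\mathcal{J}_i$), and verify each of the three conclusions separately. The non-negativity of $\Psi_i(f)$ is immediate: the boundary data are non-negative by hypothesis, all the exponential weights are positive, and $\mathcal{J}_i\ge 0$ follows from property $(\mathcal{A})$ via the definition \eqref{Juttner2} (the prefactor is a non-negative ratio of integrals and the exponential is positive).

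For the lower bound on $\int \Psi_i(f)\,dp_i/p_i^0$, I would discard the non-negative gain part and work only with the boundary contribution. Since $x\in[0,1]$, monotonicity of the exponential gives the $x$-uniform bound $e^{-\omega_i x/|p_i^1|}\ge e^{-\omega_i/|p_i^1|}$ on $\{p_i^1>0\}$ and similarly $e^{-\omega_i(1-x)/|p_i^1|}\ge e^{-\omega_i/|p_i^1|}$ on $\{p_i^1<0\}$. The integrands are dominated by $f_{i,LR}/p_i^0\in L^1(\mathbb{R}^3)$, so the dominated convergence theorem gives that this lower bound converges to $\int f_{i,LR}\,dp_i/p_i^0 = 4 a_{i,\ell}$ as $\omega_i\to 0$; hence, for $\omega_i$ small enough, it exceeds $2a_{i,\ell}$, which is strictly greater than $a_{i,\ell}$.

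For the upper bound on $\int \Psi_i(f)\,dp_i$, the boundary contribution is at most $\int f_{i,LR}\,dp_i$ since the exponentials are bounded above by $1$, while the gain contribution is controlled directly by Lemma \ref{phi2}, whose right-hand side $C\bigl(\omega_i\ln(1/\omega_i) + \omega_i + \omega_i^2 e^{-C/\omega_i}\bigr)$ tends to $0$ as $\omega_i\to 0$. Choosing $\omega_i$ sufficiently small, the gain contribution is dominated by $\int f_{i,LR}\,dp_i$, so that the total is at most $2\int f_{i,LR}\,dp_i = a_{i,u}$. The main subtlety throughout is the degeneracy of the weight $e^{-\omega_i/|p_i^1|}$ near $p_i^1=0$, which rules out any pointwise uniform-in-$p_i$ bound; the dominated convergence argument, powered by the $L^1$-integrability encoded in $(\mathcal{A})$, is precisely what circumvents this obstruction. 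By taking $\omega_i$ smaller than a common threshold determined by the above two convergences, all three desired properties hold simultaneously.
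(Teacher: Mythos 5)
Your proposal is correct and follows essentially the same route as the paper: non-negativity by dropping the gain term, the lower bound by discarding the gain term and passing the weight $e^{-\omega_i/|p_i^1|}$ to the limit, and the upper bound by combining the trivial bound on the boundary part with Lemma \ref{phi2}. The only cosmetic difference is that you justify $\int e^{-\omega_i/|p_i^1|}f_{i,LR}\,dp_i/p_i^0\to\int f_{i,LR}\,dp_i/p_i^0$ by dominated convergence, whereas the paper does the same thing by first truncating to $\{|p_i^1|>r\}$ to capture half the mass and then taking $e^{-\omega_i/r}>1/2$; both handle the degeneracy at $p_i^1=0$ equally well.
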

\begin{proof}
$\bullet$ (Non-negativity): By the second condition of $(\mathcal{A})$, we readily check 
\begin{align}\label{phi}
	\begin{split}
		\Psi_i(f)&=\left(e^{-\frac{\omega_i}{|p_i^1|}x}f_{i,L}+\frac{\omega_i}{|p_i^1|}\int_0^xe^{-\frac{\omega_i}{|p_i^1|}(x-y)}\mathcal{J}_i(y)\, dy\right)\bold{1}_{p_i^1>0}\cr
		&+\left(e^{-\frac{\omega_i}{|p_i^1|}(1-x)}f_{i,R}+\frac{\omega_i}{|p_i^1|}\int_x^1e^{-\frac{\omega_i}{|p_i^1|}(y-x)}\mathcal{J}_i(y)\,dy\right)\bold{1}_{p_i^1<0}\cr
		&\ge e^{-\frac{\omega_i}{|p_i^1|}x}f_{i,L}\bold{1}_{p_i^1>0}+e^{-\frac{\omega_i}{|p_i^1|}(1-x)}f_{i,R}\bold{1}_{p_i^1<0}\cr
		&\ge 0.
	\end{split}
\end{align} 
\noindent\newline
$\bullet$ (Lower bound): It follows from \eqref{phi} that
\begin{align}\label{Psi lower}\begin{split}
		\Psi_i(f)&\ge e^{-\frac{\omega_i}{|p_i^1|}x}f_{i,L}\bold{1}_{p_i^1>0}+e^{-\frac{\omega_i}{|p_i^1|}(1-x)}f_{i,R}\bold{1}_{p_i^1<0}\cr
		&\ge e^{-\frac{\omega_i}{|p_i^1|}}f_{i,LR},
\end{split}\end{align}
which leads to
$$
\int_{\mathbb{R}^3} \Psi_i(f)\,\frac{dp_i}{p_i^0}\ge \int_{\mathbb{R}^3} e^{-\frac{\omega_i}{|p_i^1|}}f_{i,LR}\,\frac{dp_i}{p_i^0}.
$$
For sufficiently small $r>0$, we have
\begin{align}\label{1/4}
	\int_{\mathbb{R}^3} e^{-\frac{\omega_i}{|p_i^1|}}f_{i,LR}\,\frac{dp_i}{p_i^0}\geq e^{-\frac{\omega_i}{r}}\int_{|p_i^1|>r}f_{i,LR}\,\frac{dp_i}{p_i^0}\geq \frac{e^{-\frac{\omega_i}{r}}}{2}\int_{\mathbb{R}^3}f_{i,LR}\,\frac{dp_i}{p_i^0}.
\end{align}
Thus, choosing $\omega_i$ small enough to satisfy $e^{-\frac{\omega_i}{r}}>\frac{1}{2}$, we get
\begin{align}\label{1/4-2}
	\int_{\mathbb{R}^3} \Psi_i(f)\,\frac{dp_i}{p_i^0}\geq \frac{1}{4}\int_{\mathbb{R}^3}f_{i,LR}\,\frac{dp_i}{p_i^0}=a_{i,\ell}.
\end{align}
$\bullet$ (Upper bound): Recall from Lemma \ref{phi2} that
\begin{align*} \begin{split}
		\int_{\mathbb{R}^3}\Psi_i(f)\,dp_i&=\int_{\mathbb{R}^3} e^{-\frac{\omega_i}{|p_i^1|}x}f_{i,L}\bold{1}_{p_i^1>0}+e^{-\frac{\omega_i}{|p_i^1|}(1-x)}f_{i,R}\bold{1}_{p_i^1<0}\,dp_i\cr
		&+ \int_{p_i^1>0}\frac{\omega_i}{|p_i^1|}\int_0^xe^{-\frac{\omega_i}{|p_i^1|}(x-y)}\mathcal{J}_i(y)\, dydp_i+\int_{p_i^1<0}\frac{\omega_i}{|p_i^1|}\int_x^1e^{-\frac{\omega_i}{|p_i^1|}(y-x)}\mathcal{J}_i(y)\,dydp_i\cr
		&\le\int_{\mathbb{R}^3} f_{i,LR}\,dp_i+C\left(\omega_i\ln\frac{1}{\omega_i}+\omega_i+\omega_i^2e^{-\frac{C}{\omega_i}}\right).
\end{split}\end{align*}
We then choose a sufficiently small $\omega_i$ again so that
$$
C\left(\omega_i\ln\frac{1}{\omega_i}+\omega_i+\omega_i^2e^{-\frac{C}{\omega_i}}\right)\le \int_{\mathbb{R}^3} f_{i,LR} \,dp_i
$$
which gives
$$
\int_{\mathbb{R}^3} \Psi_i(f)\,dp_i\le 2\int_{\mathbb{R}^3} f_{i,LR} dp_i=a_{i,u}.
$$

\end{proof}
In the following lemma, to avoid confusion, we will denote the momentum moments of $\Psi(f)$ by $n^\Psi_i$ and $U^\Psi_i$:
\begin{align*}
		U_i^{\Psi\mu}&=\frac{c}{n_i}\int_{\mathbb{R}^3}p^\mu_i\Psi_i(f) \,\frac{dp_i}{p_i^0},\\
		n^\Psi_i&=\frac{1}{c}\int_{\mathbb{R}^3}p^\mu_i U^\Psi_{i \mu }\Psi_i(f) \,\frac{dp_i}{p_i^0},
 \end{align*}
and $\widetilde{\beta}^\Psi$ is also determined by the following relation: 
\begin{align}\label{beta into}
	\sum_{i=1}^{N}\frac{m_i}{\tau_i}\Phi(\widetilde{\beta}^\Psi)\int_{\mathbb{R}^3}\Psi_i(f) \,\frac{dp_i}{p_i^0}=\frac{1}{c}\left[\left( \sum_{i=1}^{N}\frac{m_i }{\tau_i}n^\Psi_iU_i^{\Psi\mu}\right)\left( \sum_{j=1}^{N}\frac{m_j }{\tau_j}n^\Psi_jU^\Psi_{j\mu}\right)\right]^{\frac{1}{2}}.
\end{align}
\begin{lemma}\label{222} Let $f=(f_1,\cdots,f_N)$ be an element of the solution space $\Omega$. Then, for sufficiently small $\omega_i$, $\Psi(f)=(\Psi_1(f),\cdots,\Psi_N(f))$ satisfies
	$$
	\gamma\sum_{i=1}^{N}cm_i\omega_i \lambda_i  \int_{\mathbb{R}^3} f_{i,LR}\,\frac{dp_i}{p_i^0}\le \sum_{i=1}^{N}\omega_i \Phi_i (\widetilde{\beta}^\Psi ) \sqrt{\lambda_i} \int_{\mathbb{R}^3} f_{i,LR}\,\frac{dp_i}{p_i^0}.
	$$
	
\end{lemma}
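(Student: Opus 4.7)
The plan is to sandwich $\frac{1}{c}\bigl[(\sum_i\omega_in_i^\Psi U_i^{\Psi\mu})(\sum_j\omega_jn_j^\Psi U^\Psi_{j\mu})\bigr]^{1/2}$, the right-hand side of \eqref{beta into}, between an upper estimate of the form $\sum_i\omega_i\Phi_i(\widetilde\beta^\Psi)\sqrt{\lambda_i}M_i$, where $M_i:=\int f_{i,LR}\,dp_i/p_i^0$, and a lower estimate of the form $(1-\varepsilon)c\sum_i\omega_im_i\lambda_iM_i$. Since Remark~\ref{lambda1} yields $\gamma<1$, choosing $\omega_i$ small enough that $1-\varepsilon\ge\gamma$ will chain the two sides into the desired inequality.

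For the upper side, I first establish, for sufficiently small $\omega_i$,
$$\int_{\mathbb R^3}\Psi_i(f)\,\frac{dp_i}{p_i^0}\le\sqrt{\lambda_i}\,M_i.$$
The boundary term of $\Psi_i(f)$ contributes at most $M_i$, while the $\mathcal J_i$-contribution is bounded by $C(\omega_i\ln(1/\omega_i)+\omega_i+\omega_i^2e^{-C/\omega_i})$ via a direct re-run of the proof of Lemma~\ref{phi2} with the harmless extra factor $1/p_i^0$; this remainder is absorbed into $(\sqrt{\lambda_i}-1)M_i>0$. Multiplying by $\omega_i\Phi_i(\widetilde\beta^\Psi)$, summing, and invoking \eqref{beta into} delivers $\sum_i\omega_i\Phi_i(\widetilde\beta^\Psi)\sqrt{\lambda_i}M_i\ge\frac{1}{c}[\cdots]^{1/2}$.

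The lower side is where the main work lies: the naive Minkowski bound $p_i^\mu p_{j\mu}\ge m_im_jc^2$ used in \eqref{lowerbound} of Lemma~\ref{11} only yields $n_i^\Psi\ge m_icM_i^\Psi$, which lacks the crucial $\lambda_i$-factor. I would instead argue in two stages. First, since each $U_i^{\Psi\mu}$ is future-timelike with $U_i^{\Psi\mu}U^\Psi_{i\mu}=c^2$, the reverse Cauchy-Schwarz gives $U_i^{\Psi\mu}U^\Psi_{j\mu}\ge c^2$, so $\frac{1}{c}[\cdots]^{1/2}\ge\sum_i\omega_in_i^\Psi$. Second, starting from the explicit formula $(n_i^\Psi)^2=(\int\Psi_i\,dp_i)^2-\sum_{k=1}^3(\int p_i^k\Psi_i\,dp_i/p_i^0)^2$, the usual Euclidean Cauchy-Schwarz on each spatial component, combined with the identity $|p_i|^2/(p_i^0)^2=1-(m_ic)^2/(p_i^0)^2$, collapses the cross-term and produces the sharper estimate
$$n_i^\Psi\ \ge\ m_ic\sqrt{A_i^\Psi B_i^\Psi},\qquad A_i^\Psi:=\int\Psi_i\,dp_i,\ \ B_i^\Psi:=\int\Psi_i\,dp_i/(p_i^0)^2.$$

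Finally, $\Psi_i(f)\ge e^{-\omega_i/|p_i^1|}f_{i,LR}$ from \eqref{Psi lower}, together with the domain-splitting argument used for \eqref{1/4}, gives $A_i^\Psi\ge(1-\varepsilon)\int f_{i,LR}\,dp_i$ and $B_i^\Psi\ge(1-\varepsilon)\int f_{i,LR}\,dp_i/(p_i^0)^2$ for small $\omega_i$, so $\sqrt{A_i^\Psi B_i^\Psi}\ge(1-\varepsilon)\lambda_iM_i$ by the very definition of $\lambda_i$. Assembling the chain yields $\frac{1}{c}[\cdots]^{1/2}\ge(1-\varepsilon)c\sum_i\omega_im_i\lambda_iM_i$, and combined with the upper side this is at least $\gamma c\sum_i\omega_im_i\lambda_iM_i$ once $\omega_i$ is small enough. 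The main obstacle is the intermediate estimate $n_i^\Psi\ge m_ic\sqrt{A_i^\Psi B_i^\Psi}$: one must perform Cauchy-Schwarz componentwise on the spatial flux rather than bound $p_i^\mu p_{j\mu}$ directly, so that $|p_i|^2/(p_i^0)^2=1-(m_ic)^2/(p_i^0)^2$ can produce the $\sqrt{AB}=\lambda M$ structure matching the right-hand side of the target inequality.
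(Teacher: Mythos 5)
Your proposal is correct and follows essentially the same route as the paper: the reverse Cauchy--Schwarz bound $U_i^{\Psi\mu}U^{\Psi}_{j\mu}\ge c^2$ to reduce to $\sum_i\omega_i n_i^\Psi$, the componentwise Cauchy--Schwarz on the spatial flux combined with $|p_i|^2/(p_i^0)^2=1-(cm_i)^2/(p_i^0)^2$ to produce $n_i^\Psi\ge cm_i\sqrt{A_i^\Psi B_i^\Psi}$, the lower bound $\Psi_i(f)\ge e^{-\omega_i/|p_i^1|}f_{i,LR}$ to pass to the boundary data and recover $\lambda_i M_i$, and absorption of the $O(\omega_i\ln(1/\omega_i))$ remainder in $\int\Psi_i(f)\,dp_i/p_i^0$ into the gap $(\sqrt{\lambda_i}-1)M_i$. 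The only differences are cosmetic (you carry a factor $(1-\varepsilon)$ and compare with $\gamma$ at the end, where the paper inserts $\gamma$ directly).
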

\begin{proof}
	To investigate the lower bound of the right-hand side of \eqref{beta into}, we observe that
	$$
	U_i^{\Psi\mu} U^\Psi_{j\mu}=\sqrt{c^2+|U_i^\Psi|^2}\sqrt{c^2+|U_j^\Psi|^2}-U_i^\Psi\cdot U_j^\Psi \ge c^2
	$$
	due to the Cauchy-Schwarz inequality. Using this, one finds 
	\begin{align}\label{RHS} \begin{split}
			\left[\left( \sum_{i=1}^{N}\omega_in_i^\Psi U_i^{\Psi\mu}\right)\left( \sum_{j=1}^{N}\omega_jn_j^\Psi U_{j\mu}^\Psi\right)\right]^{\frac{1}{2}}	&= \left( \sum_{i=1}^{N}\omega_i \omega_jn_i^\Psi n_j^\Psi U_i^{\Psi\mu} U^\Psi_{j\mu}\right)^{\frac{1}{2}}\cr
			&\ge  c \sum_{i=1}^{N}\omega_in_i^\Psi \cr
			&=c\sum_{i=1}^{N}\omega_i \left\{  \left(\int_{\mathbb{R}^3} \Psi_i(f)\,dp_i\right)^2-\sum_{j=1}^3\left(\int_{\mathbb{R}^3} p_i^j\Psi_i(f)\,\frac{dp_i}{p_i^0}\right)^2  \right\}^{\frac{1}{2}}. 
	\end{split}\end{align}
	In the last line, we used the definition of $n_i^\Psi$. Applying H\"{o}lder's inequality, we obtain
	\begin{align}\label{Holder}\begin{split}
			&\left(\int_{\mathbb{R}^3} \Psi_i(f)\,dp_i\right)^2-\sum_{j=1}^3\left(\int_{\mathbb{R}^3} p_i^j\Psi_i(f)\,\frac{dp_i}{p_i^0}\right)^2\cr
			&\ge \left(\int_{\mathbb{R}^3} \Psi_i(f)\,dp_i\right)^2- \int_{\mathbb{R}^3}  \Psi_i(f)\, dp_i\int_{\mathbb{R}^3} \frac{|p_i|^2}{(cm_i)^2+|p_i|^2}\Psi_i(f)\,dp_i  \cr
			&=\int_{\mathbb{R}^3}  \Psi_i(f)\, dp_i\int_{\mathbb{R}^3} \frac{(cm_i)^2}{(cm_i)^2+|p_i|^2}\Psi_i(f)\,dp_i,
	\end{split}\end{align}
	where we recall $(p_i^0)^2=(cm_i)^2+|p_i|^2$.
	As in \eqref{1/4} and \eqref{1/4-2}, we can take $\omega_i$ small enough so that
	\begin{align}\label{gamma1}
		\int_{\mathbb{R}^3}  \Psi_i(f)\, dp_i\int_{\mathbb{R}^3} \frac{(cm_i)^2}{(cm_i)^2+|p_i|^2}\Psi_i(f)\,dp_i
		\geq \gamma(cm_i)^2 \int_{\mathbb{R}^3}  f_{i,LR}\, dp_i\int_{\mathbb{R}^3} \frac{1}{(cm_i)^2+|p_i|^2}f_{i,LR}\,dp_i. 
	\end{align}
	Combining \eqref{Psi lower}, \eqref{RHS}, \eqref{Holder}, and \eqref{gamma1}, we get
	\begin{align*}
		&\frac{1}{c}\left[\left( \sum_{i=1}^{N}\omega_in_i^\Psi U_i^{\Psi\mu}\right)\left( \sum_{j=1}^{N}\omega_jn_j^\Psi U_{j\mu}^\Psi\right)\right]^{\frac{1}{2}}\ge   \gamma\sum_{i=1}^{N}cm_i\omega_i \left\{ \int_{\mathbb{R}^3}  f_{i,LR}\, dp_i\int_{\mathbb{R}^3} \frac{1}{(p_i^0)^2}f_{i,LR}\,dp_i  \right\}^{\frac{1}{2}} 
	\end{align*}	
	which, together with the definition of $\lambda_i$ (see \eqref{lambda}), gives
	\begin{align}\label{beta into3} \begin{split}
			&\frac{1}{c}\left[\left( \sum_{i=1}^{N}\omega_in_i^\Psi U_i^{\Psi\mu}\right)\left( \sum_{j=1}^{N}\omega_jn_j^\Psi U_{j\mu}^\Psi\right)\right]^{\frac{1}{2}}\ge   \gamma\sum_{i=1}^{N}cm_i\omega_i  \lambda_i   \int_{\mathbb{R}^3} f_{i,LR}\,\frac{dp_i}{p_i^0}.   
	\end{split}\end{align}
	On the other hand, it follows from Lemma \ref{phi2} that 
	\begin{align}\label{beta into2}\begin{split}
			\int_{\mathbb{R}^3}\Psi_i(f)\,\frac{dp_i}{p_i^0}&=\int_{\mathbb{R}^3} e^{-\frac{\omega_i}{|p_i^1|}x}f_{i,L}\bold{1}_{p_i^1>0}+e^{-\frac{\omega_i}{|p_i^1|}(1-x)}f_{i,R}\bold{1}_{p_i^1<0}\,\frac{dp_i}{p_i^0}\cr
			&+ \int_{p_i^1>0}\frac{\omega_i}{|p_i^1|}\int_0^xe^{-\frac{\omega_i}{|p_i^1|}(x-y)}\mathcal{J}_i(y)\, dy\frac{dp_i}{p_i^0}+\int_{p_i^1<0}\frac{\omega_i}{|p_i^1|}\int_x^1e^{-\frac{\omega_i}{|p_i^1|}(y-x)}\mathcal{J}_i(y)\,dy\frac{dp_i}{p_i^0}\cr
			&\le\int_{\mathbb{R}^3} f_{i,LR}\,\frac{dp_i}{p_i^0}+C\left(\omega_i\ln\frac{1}{\omega_i}+\omega_i+\omega_i^2e^{-\frac{C}{\omega_i}}\right).
	\end{split}\end{align}
	Going back to \eqref{beta into} with \eqref{beta into2} and \eqref{beta into3}, we have
	\begin{align*}
		&\sum_{i=1}^{N}\omega_i \Phi_i(\widetilde{\beta}^\Psi) \left\{ \int_{\mathbb{R}^3} f_{i,LR}\,\frac{dp_i}{p_i^0}+C\left(\omega_i\ln\frac{1}{\omega_i}+\omega_i+\omega_i^2e^{-\frac{C}{\omega_i}}\right) \right\}\cr
		&\ge \gamma\sum_{i=1}^{N}cm_i\omega_i \lambda_i   \int_{\mathbb{R}^3} f_{i,LR}\,\frac{dp_i}{p_i^0}.
	\end{align*}
	We choose sufficiently small $\omega_i$ satisfying
	$$
	C\left(\omega_i\ln\frac{1}{\omega_i}+\omega_i+\omega_i^2e^{-\frac{C}{\omega_i}}\right)\le  \left(\sqrt{\lambda_i}-1\right) \int_{\mathbb{R}^3} f_{i,LR}\,\frac{dp_i}{p_i^0}
	$$
	where $\lambda_i$ is strictly bigger than $1$  for all $x\in [0,1]$ (see Remark \ref{lambda1}). Thus we have
	\begin{align*}
		\gamma\sum_{i=1}^{N}cm_i\omega_i \lambda_i  \int_{\mathbb{R}^3} f_{i,LR}\,\frac{dp_i}{p_i^0}\le \sum_{i=1}^{N}\omega_i \Phi_i(\widetilde{\beta}^\Psi) \sqrt{\lambda_i} \int_{\mathbb{R}^3} f_{i,LR}\,\frac{dp_i}{p_i^0}.
	\end{align*}

\end{proof}

\section{Contraction mapping}
This section is devoted to showing the Lipschitz continuity of the solution operator $\Psi$ in the solution space $\Omega$, which completes the proof of Theorem \ref{main} due to the Banach fixed-point theorem. For this, we first estimate auxiliary parameters consisting of $\mathcal{J}_i$.
\begin{lemma}\label{lip}
Let $f=(f_1,\cdots,f_N)$ and $g=(g_1,\cdots,g_N)$ be elements of the solution space $\Omega$. We then have
	\begin{align*}
		|\widetilde{U}_f-\widetilde{U}_g|+ |\widetilde{\beta}_f-\widetilde{\beta}_g|\leq C\sum_{i=1}^N\|f_i-g_i\|_{L^1(\mathbb{R}^3)}.
	\end{align*}
\begin{proof}
$\bullet$ (Estimate of $\widetilde U$):	For notational convenience, we define 
	\begin{align}\label{A}
		A_f^\mu:=\sum_{i=1}^N\omega_in_{f_i}U^\mu_{f_i}.
	\end{align}
Then we have 
\begin{align}\label{Ulip}
	|\widetilde{U}_f-\widetilde{U}_g|&= c\frac{\left|\sqrt{A_f^\mu A_{f\mu}}A_g-\sqrt{A_g^\mu A_{g\mu}}A_f\right|}{\sqrt{A_f^\mu A_{f\mu}A_g^\mu A_{g\mu}}}.
\end{align}
For the numerator, we get
\begin{align}\label{numerator}
	\begin{split}
	&\left|\sqrt{A_f^\mu A_{f\mu}}A_g-\sqrt{A_g^\mu A_{g\mu}}A_f\right|\\
	&\leq \sqrt{A_f^\mu A_{f\mu}}\left|A_g-A_f\right|+|A_f|\left|\sqrt{A_f^\mu A_{f\mu}}-\sqrt{A_g^\mu A_{g\mu}}\right|\\
	&\leq \sqrt{A_f^\mu A_{f\mu}}\left|A_g-A_f\right|+|A_f|\frac{|A_f^\mu(A_{f\mu}-A_{g\mu})|+|A_g^\mu(A_{f\mu}-A_{g\mu})|}{\sqrt{A_f^\mu A_{f\mu}}+\sqrt{A_g^\mu A_{g\mu}}}\\
	&\leq \sqrt{A_f^\mu A_{f\mu}}\left|A_g-A_f\right|+|A_f|\frac{(A_f^0+A_g^0)|A_{f0}-A_{g0}|+(|A_f|+|A_g|)|A_f-A_g|}{\sqrt{A_f^\mu A_{f\mu}}+\sqrt{A_g^\mu A_{g\mu}}}.
\end{split}
\end{align}
It follows from \eqref{lowerbound} and \eqref{upperbound} that
\begin{align}\label{AA}
	\begin{split}
	\left(c\sum_{i=1}^{N}\omega_im_i a_{i,\ell}\right)^2\leq\left(c\sum_{i=1}^{N}\omega_i m_i \int_{\mathbb{R}^3}  f_i  \,\frac{dp_i}{p_i^0} \right)^2&\leq A_f^\mu A_{f\mu} \leq\left(2c\sum_{i=1}^{N}\omega_i \int_{\mathbb{R}^3}  f_i  dp_i\right)^2\leq \left(2c\sum_{i=1}^{N}\omega_i a_{i,u}\right)^2.\end{split}
\end{align}
Also, due to the property $(\mathcal{A})$, we have
\begin{align}\label{AAA}
	|A_f|+A_f^0\le c\sum_{i=1}^N\omega_i\left(\int_{\mathbb{R}^3}|p_i|f_i\,\frac{dp_i}{p_i^0}+\int_{\mathbb{R}^3}f_i\, dp_i\right)  \leq 2c\sum_{i=1}^N\omega_i \int_{\mathbb{R}^3}f_i\,dp_i\leq  2c\sum_{i=1}^N\omega_i a_{i,u}.
\end{align}
Obviously, these observations hold true for $A_g$.  Thus, these observations \eqref{Ulip}, \eqref{numerator}, \eqref{AA}, and \eqref{AAA} give
\begin{align*}
	|\widetilde{U}_f-\widetilde{U}_g|&\leq C|A_f-A_g|\\
	&\leq C\left|\sum_{i=1}^N\omega_ic\int_{\mathbb{R}^3}f_i-g_i\frac{dp_i}{p_i^0}\right|\\
	&\leq C\sum_{i=1}^N\|f_i-g_i\|_{L^1(\mathbb{R}^3)}.
\end{align*}
$\bullet$ (Estimate of $\widetilde\beta$): We recall that $\widetilde{\beta}_f$ and $\widetilde{\beta}_g$ satisfy
\begin{align*}
	\sum_{i=1}^{N}\omega_i\Phi_i(\widetilde{\beta}_f)\int_{\mathbb{R}^3}f_i \ \frac{dp_i}{p_i^0}&=\frac{1}{c}\left[\left( \sum_{i=1}^{N}\omega_in_{f_i}U_{f_i}^\mu\right)\left( \sum_{j=1}^{N}\omega_jn_{f_j}U_{{f_j}\mu}\right)\right]^{\frac{1}{2}}\\
	\sum_{i=1}^{N}\frac{m_i}{\tau_i}\Phi_i(\widetilde{\beta}_g)\int_{\mathbb{R}^3}g_i \ \frac{dp_i}{p_i^0}&=\frac{1}{c}\left[\left( \sum_{i=1}^{N}\omega_in_{g_i}U_{g_i}^\mu\right)\left( \sum_{j=1}^{N}\omega_jn_{g_j}U_{{g_j}\mu}\right)\right]^{\frac{1}{2}}.
\end{align*}
From this and the notation \eqref{A}, we can obtain
\begin{align}\label{tri}
	\begin{split}
	c\sum_{i=1}^N\omega_i\left(\Phi_i(\widetilde{\beta}_f)-\Phi_i(\widetilde{\beta}_g)\right)\int_{\mathbb{R}^{3}}f_i\frac{dp_i}{p_i^0}=& \ c\sum_{i=1}^N\omega_i\Phi_i(\widetilde{\beta}_g)\left(\int_{\mathbb{R}^{3}}g_i\frac{dp_i}{p_i^0}-\int_{\mathbb{R}^{3}}f_i\frac{dp_i}{p_i^0}\right)\\
	&+\sqrt{A_f^\mu A_{f\mu}}-\sqrt{A_g^\mu A_{g\mu}}.
	\end{split}
\end{align}
We recall that each $\Phi_i$ is monotone decreasing so that all the signs of $\Phi_i(\widetilde{\beta}_f)-\Phi_i(\widetilde{\beta}_g)$, $(i=1,\cdots,N)$, are same. This, together with \eqref{tri}, gives that
\begin{align}\label{ineq111}
	\begin{split}
	&c\sum_{i=1}^N\omega_i\left|\Phi_i(\widetilde{\beta}_f)-\Phi_i(\widetilde{\beta}_g)\right|\int_{\mathbb{R}^{3}}f_i\frac{dp_i}{p_i^0}\cr &=\left|c\sum_{i=1}^N\omega_i\left(\Phi_i(\widetilde{\beta}_f)-\Phi_i(\widetilde{\beta}_g)\right)\int_{\mathbb{R}^{3}}f_i\frac{dp_i}{p_i^0}\right|\\
	&\leq c\sum_{i=1}^N\omega_i\Phi_i(\widetilde{\beta}_g)\left|\int_{\mathbb{R}^{3}}f_i\frac{dp_i}{p_i^0}-\int_{\mathbb{R}^{3}}g_i\frac{dp_i}{p_i^0}\right|+\left|\sqrt{A_f^\mu A_{f\mu}}-\sqrt{A_g^\mu A_{g\mu}}\right|.
	\end{split}
\end{align}
Here we used Lemma \ref{11} to obtain
\begin{align*}
	c\sum_{i=1}^N\omega_i\Phi_i(\widetilde{\beta}_g)\left|\int_{\mathbb{R}^{3}}f_i\frac{dp_i}{p_i^0}-\int_{\mathbb{R}^{3}}g_i\frac{dp_i}{p_i^0}\right|&\leq c\sum_{i=1}^N\omega_i\Phi_i(\widetilde{\beta}_\ell)\|f_i-g_i\|_{L^1_p}\\
	&\leq C\sum_{i=1}^N\omega_i\|f_i-g_i\|_{L^1_p}
\end{align*}
where $C=c\max_{i=1,\cdots, N}\Phi_i(\widetilde{\beta}_\ell)>0$ in the last line. Also, for the second term of the last line \eqref{ineq111}, it follows from \eqref{numerator}, \eqref{AA}, and \eqref{AAA} that
\begin{align*}
	\left|\sqrt{A_f^\mu A_{f\mu}}-\sqrt{A_g^\mu A_{g\mu}}\right|&\leq \frac{(A_f^0+A_g^0)|A_{f0}-A_{g0}|+(|A_f|+|A_g|)|A_f-A_g|}{\sqrt{A_f^\mu A_{f\mu}}+\sqrt{A_g^\mu A_{g\mu}}}\\
	&\leq C |A_f-A_g|\\
	&= C\left|\sum_{i=1}^N\omega_ic\int_{\mathbb{R}^3}p_i(f_i-g_i)\frac{dp_i}{p_i^0}\right|\\
	&\leq C\sum_{i=1}^N\omega_i\|f_i-g_i\|_{L^1_p}.
\end{align*}
Combining these estimates with the property ($\mathcal{A}$), we have
\begin{align*}
	c\left(\min_{i=1,\cdots, N}a_{i,\ell}\right)\sum_{i=1}^N\omega_i\left|\Phi_i(\widetilde{\beta}_f)-\Phi_i(\widetilde{\beta}_g)\right|\leq C\sum_{i=1}^N\omega_i\|f_i-g_i\|_{L^1_p},
\end{align*}
which implies that for at least one $j\in \{1,\cdots,N\} $, 
$$
\left|\Phi_j(\widetilde{\beta}_f)-\Phi_j(\widetilde{\beta}_g)\right|\leq C\|f_j-g_j\|_{L^1_p}.
$$
By the mean value theorem, there exists a positive value $\theta\in[\widetilde{\beta}_\ell,\widetilde{\beta}_u]$ such that
$$
\left|\frac{\partial\Phi_j}{\partial\widetilde{\beta}}(\theta)\times(\widetilde{\beta}_f-\widetilde{\beta}_g)\right|\leq C\|f_j-g_j\|_{L^1_p}.
$$
We note that
$$
\min_{\theta\in[\widetilde{\beta}_\ell,\widetilde{\beta}_u]}\left|\frac{\partial\Phi_j}{\partial\widetilde{\beta}}(\theta)\right|>0,
$$
which concludes that
$$
|\widetilde{\beta}_f-\widetilde{\beta}_g|\leq C\|f_j-g_j\|_{L^1_p} \leq C\sum_{i=1}^N\|f_i-g_i\|_{L^1_p}.
$$
\end{proof}
\end{lemma}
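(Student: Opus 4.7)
The plan is to handle the two quantities $\widetilde{U}$ and $\widetilde\beta$ in succession, in both cases reducing the problem to a Lipschitz estimate on the auxiliary four-vector
$$A_f^\mu := \sum_{i=1}^N \omega_i n_{f_i} U_{f_i}^\mu = c\sum_{i=1}^N \omega_i \int_{\mathbb{R}^3} p_i^\mu f_i\,\frac{dp_i}{p_i^0},$$
so that $\widetilde U_f^\mu = c\,A_f^\mu/\sqrt{A_f^\nu A_{f\nu}}$ and the right-hand side of the $\widetilde\beta$-relation is $\sqrt{A_f^\nu A_{f\nu}}/c$. The key observation is that $|A_f - A_g|$ is directly bounded by $\sum_i \omega_i \|f_i - g_i\|_{L^1}$, since $|p_i^\mu|/p_i^0 \le 1$ for every component.

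For $\widetilde U$ the calculation is the standard manipulation
$$\widetilde U_f - \widetilde U_g = c\,\frac{\sqrt{A_f^\nu A_{f\nu}}\,A_g - \sqrt{A_g^\nu A_{g\nu}}\,A_f}{\sqrt{A_f^\nu A_{f\nu}}\sqrt{A_g^\nu A_{g\nu}}},$$
which I split into a piece proportional to $|A_f - A_g|$ and a piece proportional to $|\sqrt{A_f^\nu A_{f\nu}} - \sqrt{A_g^\nu A_{g\nu}}|$; the latter is treated by the elementary $\sqrt{a}-\sqrt{b} = (a-b)/(\sqrt a + \sqrt b)$ identity. To turn these into a clean Lipschitz bound I need a strictly positive lower bound and a finite upper bound on $\sqrt{A_f^\nu A_{f\nu}}$, uniformly on $\Omega$; these are exactly the inequalities already used in the proof of Lemma \ref{11}, coming from $p_i^\mu p_{j\mu}\ge m_im_jc^2$ on one side and the coarse bound $p_i^0p_j^0\le 4(p_i^0)^2\cdot (p_j^0)^2/(p_i^0 p_j^0)$ on the other, combined with property $(\mathcal A)$.

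For $\widetilde\beta$ I subtract the two defining relations \eqref{beta identity}, add and subtract $\sum_i \omega_i \Phi_i(\widetilde\beta_g)\int g_i\,dp_i/p_i^0$, and obtain
$$c\sum_{i=1}^N \omega_i \bigl(\Phi_i(\widetilde\beta_f) - \Phi_i(\widetilde\beta_g)\bigr) \int_{\mathbb{R}^3} f_i\,\frac{dp_i}{p_i^0} = c\sum_{i=1}^N \omega_i \Phi_i(\widetilde\beta_g)\!\int_{\mathbb{R}^3}\!(g_i - f_i)\,\frac{dp_i}{p_i^0} + \sqrt{A_f^\nu A_{f\nu}} - \sqrt{A_g^\nu A_{g\nu}}.$$
The right-hand side is controlled by $C\sum_i \omega_i \|f_i - g_i\|_{L^1}$: the first term because $\Phi_i(\widetilde\beta_g)\le \Phi_i(\widetilde\beta_\ell)$ by monotonicity and Lemma \ref{11}, the second by the previous paragraph. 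The crucial structural point is that since each $\Phi_i$ is strictly monotone in the same direction, all of the differences $\Phi_i(\widetilde\beta_f) - \Phi_i(\widetilde\beta_g)$ share a common sign, so absolute values can be brought inside the sum on the left without any cancellation loss. Combined with the uniform lower bound $a_{i,\ell}$ on $\int f_i\,dp_i/p_i^0$, this produces $\sum_i \omega_i |\Phi_i(\widetilde\beta_f) - \Phi_i(\widetilde\beta_g)| \le C\sum_i \|f_i - g_i\|_{L^1}$; pigeonholing on $i$ and applying the mean value theorem on the compact interval $[\widetilde\beta_\ell, \widetilde\beta_u]$ (where $|\Phi_i'|$ attains a strictly positive minimum) gives the desired $|\widetilde\beta_f - \widetilde\beta_g|$ bound.

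The main obstacle is the $\widetilde\beta$ step, because \eqref{beta identity} is highly nonlinear and couples all species. Without the sign-coherence observation the absolute-value step would cost a factor that could blow up when some $\Phi_i(\widetilde\beta_f) - \Phi_i(\widetilde\beta_g)$ nearly cancels others, and without the two-sided bound from Lemma \ref{11} the mean value argument would fail because $|\Phi_i'|$ degenerates at the endpoints of $(0,\infty)$. These two ingredients together are what make the argument close.
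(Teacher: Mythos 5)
Your proposal follows essentially the same route as the paper's proof: the same reduction to the auxiliary vector $A_f^\mu$, the same conjugate-identity splitting with the two-sided bounds on $A_f^\mu A_{f\mu}$ from Lemma \ref{11} and property $(\mathcal A)$, and for $\widetilde\beta$ the same subtraction of the defining relations, sign-coherence of the $\Phi_i(\widetilde\beta_f)-\Phi_i(\widetilde\beta_g)$, pigeonholing, and mean value theorem on $[\widetilde\beta_\ell,\widetilde\beta_u]$. The argument is correct as planned.
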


\begin{lemma}\label{JJ} Let $f=(f_1,\cdots,f_N)$ and $g=(g_1,\cdots,g_N)$ be elements of the solution space $\Omega$. For sufficiently small $w_i$, we have
	$$
	|\mathcal{J}_i(f)-\mathcal{J}_i(g)|\le Ce^{-Cp_i^0}\sum_{i=1}^N\|f_i-g_i\|_{L^1(\mathbb{R}^3)}.
	$$
	\begin{proof}
		For brevity, we rewrite $$
		\mathcal{J}_i(f)\equiv J_i(\alpha,\widetilde{\beta},\widetilde{U})=\frac{\alpha}{\int_{\mathbb{R}^3} e^{-c\widetilde{\beta} p_i^0} \frac{dp_i}{p_i^0}}e^{-\widetilde{\beta}\widetilde{U}^\mu p_{i \mu }}.
		 $$
By the mean value theorem, there exists a constant $\theta\in [0,1]$ such that
\begin{align*}
	|\mathcal{J}_i(f)-\mathcal{J}_{i}(g)|&=|J_i(\alpha_{f_i},\widetilde{\beta}_f,\widetilde{U}_f)-J(\alpha_{g_i},\widetilde{\beta}_g,\widetilde{U}_g)|\\
	&=|\nabla_{\alpha,\widetilde{\beta},\widetilde{U}}J_i(\theta)\cdot(\alpha_{f_i}-\alpha_{g_i},\widetilde{\beta}_f-\widetilde{\beta}_g,\widetilde{U}_f-\widetilde{U}_g)|
\end{align*}
where $J_i(\theta)$ denotes
\begin{align*}
	J_i(\theta)=J_i\big((1-\theta)\alpha_{f_i}+\theta \alpha_{g_i},(1-\theta)\widetilde{\beta}_f+\theta \widetilde{\beta}_g,(1-\theta)\widetilde{U}_f+\theta\widetilde{U}_g\big),
\end{align*}
and $\alpha_{f_i},\alpha_{g_i}$ are given by
\begin{align*}
	\alpha_{f_i}=\int_{\mathbb{R}^3}f_i\frac{dp_i}{p_i^0},\qquad \text{and}\qquad \alpha_{g_i}=\int_{\mathbb{R}^3}g_i\frac{dp_i}{p_i^0}.
\end{align*}
Then, we have from Lemma \ref{lip} that
\begin{align*}
		|\mathcal{J}_{i}(f)-\mathcal{J}_{i}(g)|\leq C|\nabla_{\alpha,\widetilde{\beta},\widetilde{U}}J_i(\theta)|\sum_{i=1}^N\|f_i-g_i\|_{L^1(\mathbb{R}^3)}.
\end{align*}
By simple computations, we find
\begin{align*}
	\frac{\partial J_i}{\partial\alpha}=\frac{1}{\alpha}J_i,\quad  \frac{\partial J_i}{\partial\widetilde\beta}=\left(c\Phi_i(\widetilde{\beta})-\widetilde{U}^\mu p_{i\mu}\right)J_i,\quad \frac{\partial J_i}{\partial \widetilde{U}}=\frac{\widetilde{\beta}}{k}\left(p_i-\frac{\widetilde{U}p_i^0}{\widetilde{U}^0}\right)J_i,
\end{align*}
where we used the fact that
\begin{align*}
	\frac{\partial}{\partial\widetilde{\beta}}\left(\frac{1}{\int_{\mathbb{R}^3} e^{-c\widetilde{\beta} p_i^0} \frac{dp_i}{p_i^0}}\right)=\frac{c\int_{\mathbb{R}^3} e^{-c\widetilde{\beta} p_i^0} dp_i}{\left(\int_{\mathbb{R}^3} e^{-c\widetilde{\beta} p_i^0} \frac{dp_i}{p_i^0}\right)^2}=\frac{c\Phi_i(\widetilde{\beta})}{\int_{\mathbb{R}^3} e^{-c\widetilde{\beta} p_i^0} \frac{dp_i}{p_i^0}}.
\end{align*}
Lemma \ref{11} and Lemma \ref{22}, together with $\alpha_{f_i},\alpha_{g_i}\geq a_{i,\ell}$, give
\begin{align*}
	\left|\frac{\partial J_i}{\partial\alpha}\right|&\leq Ce^{-Cp_i^0},\\
	\left|\frac{\partial J_i}{\partial\beta}\right|&\leq  c\Phi_i(\widetilde{\beta}_\ell)J_i+2\widetilde{U}_up_i^0J_i\leq  Ce^{-Cp_i^0},\\
	\left|\frac{\partial J_i}{\partial\widetilde{U}}\right|&\leq 2\frac{\widetilde{\beta}_u}{k}p_i^0J_i\leq Ce^{-Cp_i^0},
\end{align*}
where we used the elementary inequality $xe^{-x}\leq Ce^{-Cx}.$
\end{proof}

\end{lemma}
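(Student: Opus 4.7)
My plan is to write $\mathcal{J}_i$ as a smooth function $J_i(\alpha, \widetilde{\beta}, \widetilde{U})$ of the three parameters on which it depends through $f$ --- namely $\alpha_{f_i} := \int_{\mathbb{R}^3} f_i \, dp_i/p_i^0$ together with $\widetilde{\beta}_f$ and $\widetilde{U}_f$ --- and then apply the mean value theorem along the line segment joining the $f$- and $g$-parameters in parameter space. Because Lemma \ref{11} and the $(\mathcal{A})$-bound $\alpha \ge a_{i,\ell}$ confine this whole segment to a compact set that is independent of the particular $f,g\in\Omega$, it will be enough to bound $|\nabla_{\alpha,\widetilde{\beta},\widetilde{U}} J_i|$ uniformly on that set by $Ce^{-Cp_i^0}$.

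The parameter-difference side of the MVT is essentially free. The inequality $|\alpha_{f_i} - \alpha_{g_i}| \le \|f_i - g_i\|_{L^1}$ is immediate, while Lemma \ref{lip} provides the matching Lipschitz bound for $\widetilde{\beta}$ and $\widetilde{U}$. So the entire proof reduces to computing the three partial derivatives of $J_i$ explicitly and bounding each of them by $Ce^{-Cp_i^0}$; then one just chains the estimates together. Each derivative turns out to be of the form (prefactor) $\times J_i$, and Lemma \ref{22} already supplies $J_i \le C_1 e^{-C_2 p_i^0}$ uniformly on the compact parameter region --- the proof of that lemma only used the $\widetilde{\beta}_\ell, \widetilde{\beta}_u, \widetilde{U}_u$ bounds, which automatically hold on the segment.

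The step I anticipate as the main obstacle is that, for $\partial_{\widetilde{\beta}} J_i$ and $\partial_{\widetilde{U}} J_i$, the prefactor contains a term linear in $p_i^0$ (coming from differentiating the exponent $-\widetilde{\beta}\widetilde{U}^\mu p_{i\mu}$; the $\widetilde{\beta}$-derivative also produces a bounded $c\Phi_i(\widetilde{\beta})$ contribution from the normalization integral via the identity $\tfrac{d}{d\widetilde{\beta}}\bigl(\int e^{-c\widetilde{\beta} p_i^0}\,dp_i/p_i^0\bigr)^{-1} = c\Phi_i(\widetilde{\beta}) \cdot \bigl(\int e^{-c\widetilde{\beta} p_i^0}\,dp_i/p_i^0\bigr)^{-1}$). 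A naive estimate would lose the exponential decay. I would absorb these linear factors via the elementary inequality $xe^{-x} \le Ce^{-x/2}$: sacrificing a small fraction of the decay rate $C_2$ from Lemma \ref{22} swallows any polynomial-in-$p_i^0$ prefactor and restores a clean $Ce^{-Cp_i^0}$ bound. Combined with Lemma \ref{lip} and the MVT, this yields the stated estimate.
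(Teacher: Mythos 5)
Your proposal is correct and follows essentially the same route as the paper: rewrite $\mathcal{J}_i$ as $J_i(\alpha,\widetilde{\beta},\widetilde{U})$, apply the mean value theorem on the parameter segment, control the parameter differences via Lemma \ref{lip}, and bound each partial derivative by $Ce^{-Cp_i^0}$ using the uniform bounds of Lemmas \ref{11} and \ref{22} together with the inequality $xe^{-x}\le Ce^{-Cx}$ to absorb the linear-in-$p_i^0$ prefactors. Even the identity you flag for the $\widetilde{\beta}$-derivative of the normalization integral (producing the $c\Phi_i(\widetilde{\beta})$ term) is exactly the computation the paper carries out.
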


\begin{proposition} Let $f=(f_1,\cdots,f_N)$ and $g=(g_1,\cdots,g_N)$ be elements of the solution space $\Omega$. For sufficiently small $\omega_i$, there exists a constant $0<\delta<1$ such that
\[
\|\Psi_i(f)-\Psi_i(g)\|\leq\delta \sum_{i=1}^N\|f_i-g_i\| .
\]
\end{proposition}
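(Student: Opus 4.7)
The plan is to exploit the linearity of the mild form in its non-attractor part: since $\Psi_i(f)$ and $\Psi_i(g)$ share the same boundary data $f_{i,L}$ and $f_{i,R}$, those terms cancel in the difference and we are left only with the contribution from $\mathcal{J}_i(f) - \mathcal{J}_i(g)$. Concretely, I would write
\begin{align*}
\Psi_i(f) - \Psi_i(g) &= \frac{\omega_i}{|p_i^1|}\int_0^x e^{-\frac{\omega_i}{|p_i^1|}(x-y)}\bigl(\mathcal{J}_i(f)(y) - \mathcal{J}_i(g)(y)\bigr)\,dy\,\mathbf{1}_{p_i^1>0}\\
&\quad + \frac{\omega_i}{|p_i^1|}\int_x^1 e^{-\frac{\omega_i}{|p_i^1|}(y-x)}\bigl(\mathcal{J}_i(f)(y) - \mathcal{J}_i(g)(y)\bigr)\,dy\,\mathbf{1}_{p_i^1<0}
\end{align*}
and invoke Lemma \ref{JJ} pointwise in $y$, giving the bound $|\mathcal{J}_i(f)(y) - \mathcal{J}_i(g)(y)| \le C e^{-C p_i^0}\sum_{j=1}^N\|f_j(y,\cdot)-g_j(y,\cdot)\|_{L^1(\mathbb{R}^3)}$.

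Next I would take the $L^1$-norm on $[0,1]\times\mathbb{R}^3$ and apply Fubini to perform the $x$-integration first. For the $p_i^1>0$ piece, swapping the order of $x$ and $y$ and using the elementary identity $\int_y^1 \frac{\omega_i}{p_i^1}e^{-\frac{\omega_i}{p_i^1}(x-y)}\,dx \le 1 - e^{-\omega_i/p_i^1}$, the spatial integration reduces to a momentum integral of exactly the type handled in Lemma \ref{phi2}. Applying the same three-region split of $p_i^1$ and integrating out $p_i^2, p_i^3$ against $e^{-C p_i^0}$, I obtain
\begin{equation*}
\int_{p_i^1>0}\bigl(1 - e^{-\omega_i/p_i^1}\bigr)e^{-C p_i^0}\,dp_i \le C\bigl(\omega_i\ln\tfrac{1}{\omega_i} + \omega_i + \omega_i^2 e^{-C/\omega_i}\bigr),
\end{equation*}
and the $p_i^1<0$ piece is identical by symmetry. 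The remaining $y$-integral of $\|f_j(y,\cdot)-g_j(y,\cdot)\|_{L^1(\mathbb{R}^3)}$ is exactly $\|f_j - g_j\|_{L^1([0,1]\times\mathbb{R}^3)}$, so altogether
\begin{equation*}
\|\Psi_i(f) - \Psi_i(g)\| \le C\bigl(\omega_i\ln\tfrac{1}{\omega_i} + \omega_i + \omega_i^2 e^{-C/\omega_i}\bigr)\sum_{j=1}^N\|f_j - g_j\|.
\end{equation*}

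Finally, since the prefactor $C(\omega_i\ln(1/\omega_i) + \omega_i + \omega_i^2 e^{-C/\omega_i})$ tends to $0$ as $\omega_i \to 0^+$, choosing every $\omega_i$ sufficiently small makes it smaller than any prescribed $\delta \in (0,1)$, giving the desired contraction. The main (mild) obstacle is simply the bookkeeping of the double integration: making sure that after Fubini, the $y$-integration collapses to a single factor $\|f_j - g_j\|$ and that the $p_i$-integration produces the small-$\omega_i$ factor, rather than the two getting coupled in a way that spoils either bound. With the contraction in hand on the closed subset $\Omega$ of the Banach space $\prod_i L^1([0,1]\times \mathbb{R}^3)$, the Banach fixed-point theorem yields the unique mild solution and completes the proof of Theorem \ref{main}.
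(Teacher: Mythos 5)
Your proposal follows essentially the same route as the paper: cancel the boundary terms in the difference, bound $\mathcal{J}_i(f)-\mathcal{J}_i(g)$ pointwise by Lemma \ref{JJ}, and then reuse the three-region splitting of Lemma \ref{phi2} to extract the factor $C\bigl(\omega_i\ln\tfrac{1}{\omega_i}+\omega_i+\omega_i^2e^{-C/\omega_i}\bigr)$, which is made less than $1$ by shrinking $\omega_i$. Your explicit Fubini step, making clear that the $y$-integral collapses to $\|f_j-g_j\|_{L^1([0,1]\times\mathbb{R}^3)}$ while the momentum integral supplies the small factor, is a slightly more careful rendering of the bookkeeping the paper leaves implicit, but it is not a different argument.
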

\begin{proof}
By definition of $\Psi$, we see that
\begin{align*}
&\int_{\mathbb{R}^3} |\Psi_i(f)-\Psi_i(g)|dp_i\cr
&=\int_{p_1>0}|\Psi_i(f)-\Psi_i(g)|dp_i+\int_{p_1<0}|\Psi_i(f)-\Psi_i(g)|dp_i \cr
&\leq \int_{p_1>0}\int_0^x\frac{\omega_i}{p_i^1}e^{-\frac{\omega_i}{p_i^1}(x-y)}|\mathcal{J}_i(f)-\mathcal{J}_i(g)|dydp_i+ \int_{p_i^1<0}\int_x^1 \frac{\omega_i}{|p_i^1|}e^{-\frac{\omega_i}{|p_i^1|}(y-x)}|\mathcal{J}_i(f)-\mathcal{J}_i(g)|dydp_i.
\end{align*}
Applying Lemma \ref{JJ}, we obtain
\begin{align*}
&\int_{p_1>0}\int_0^x\frac{\omega_i}{p_i^1}e^{-\frac{\omega_i}{p_i^1}(x-y)}|\mathcal{J}_i(f)-\mathcal{J}_i(g)|dydp_i+\int_{p_i^1<0}\int_x^1 \frac{\omega_i}{|p_i^1|}e^{-\frac{\omega_i}{|p_i^1|}(y-x)}|\mathcal{J}_i(f)-\mathcal{J}_i(g)|dydp_i\\
&\le C\biggl\{\int_{p_i^1>0}\int_0^x\frac{\omega_i}{p_i^1} e^{-\frac{\omega_i}{p_i^1}(x-y)}e^{-Cp_i^0}dydp_i+\int_{p_i^1<0}\int_x^1 \frac{\omega_i}{|p_i^1|}e^{-\frac{\omega_i}{|p_i^1|}(y-x)}e^{-Cp_i^0}dydp_i\biggl\}\sum_{i=1}^N\|f_i-g_i\|_{L^1(\mathbb{R}^3)}.
\end{align*}
In the almost same manner as in the proof of Lemma \ref{phi2}, we can further estimate the last term of the above as follows: 
\begin{align*}
	 &\biggl\{\int_{p_i^1>0}\int_0^x\frac{\omega_i}{p_i^1} e^{-\frac{\omega_i}{p_i^1}(x-y)}e^{-Cp_i^0}dydp_i+\int_{p_i^1<0}\int_x^1 \frac{\omega_i}{|p_i^1|}e^{-\frac{\omega_i}{|p_i^1|}(y-x)}e^{-Cp_i^0}dydp_i\biggl\}\sum_{i=1}^N\|f_i-g_i\|_{L^1(\mathbb{R}^3)}\cr
	&\le C\left(\omega_i\ln\frac{1}{\omega_i}+\omega_i+\omega_i^2e^{-\frac{C}{\omega_i}}\right)\sum_{i=1}^N\|f_i-g_i\|_{L^1(\mathbb{R}^3)},
\end{align*} 
which gives
$$
\|\Psi_i(f)-\Psi_i(g)\|_{L^1(\mathbb{R}^3)}\le C\left(\omega_i\ln\frac{1}{\omega_i}+\omega_i+\omega_i^2e^{-\frac{C}{\omega_i}}\right)\sum_{i=1}^N\|f_i-g_i\|_{L^1(\mathbb{R}^3)}.
$$
Therefore, integrating in $x\in [0,1]$, adding over $i=1,\cdots, N$, and letting $w_i$ sufficiently small, we obtain the desired results.
\end{proof}
\noindent{\bf Acknowledgement}
B.-H. Hwang was supported by Basic Science Research Program through the National Research Foundation of Korea(NRF) funded by the Ministry of Education (No. 2019R1A6A1A10073079). Myeong-Su Lee was supported by Basic Science Research Program through the National Research Foundation of Korea (NRF) funded by the Ministry of Education (RS-2023-00244475).


\begin{thebibliography}{10}
\bibitem{ATK} Aoki, K., Takata, S., and Kosuge, S.: Vapor flows caused by evaporation and condensation on two parallel plane surfaces: Effect of the presence of a noncondensable gas. Physics of fluids, 10(6), 1519-1533 (1998).


\bibitem{AW}
Anderson, J. L., and Witting. H. R. : A relativistic relaxation-time model for the Boltzmann equation, Physica {\bf74} (1974) 466

\bibitem{BaeY} Bae, G. C., and Yun, S. B.: Stationary quantum BGK model for bosons and fermions in a bounded interval. Journal of Statistical Physics, 178, 845-868 (2020). 

\bibitem{BY} Bang, J., and Yun, S. B.: Stationary solutions for the ellipsoidal BGK model in a slab. Journal of Differential Equations, 261(10), 5803-5828 (2016).


\bibitem{BCN} A. Bellouquid, J. Calvo, J. Nieto, J. Soler, On the relativistic BGK-Boltzmann model: asymptotics and hydrodynamics,
J. Stat. Phys. 149 (2012) 284–316.

\bibitem{BNU} ] A. Bellouquid, J. Nieto, L. Urrutia, Global existence and asymptotic stability near equilibrium for the relativistic BGK model, Nonlinear Anal. 114 (2015) 87–104.



\bibitem{BCNS} Bellouquid, A., Calvo, J., Nieto, J., and Soler, J. : On the relativistic BGK-Boltzmann model: asymptotics and hydrodynamics. J. Stat. Phys. {\bf 149} (2012) 284--316.
\bibitem{BGK}
Bhatnagar, P. L., Gross, E. P., and Krook, M. : A model for collision processes in gases. Physical Revis, {\bf{94}} (1954), 511-525


\bibitem{CJS}
V J. Calvo, P.-E. Jabin, J. Soler, Global weak solutions to the relativistic BGK equation, Comm. Partial Differential
Equations 45 (3) (2020) 191–229.


\bibitem{cer}Cercignani, C.: Rarefied gas dynamics: from basic concepts to actual calculations (Vol. 21). Cambridge university press (2000).		

\bibitem{CKT} Chen, Y., Kuang, Y., Tang, H.: Second-order accurate genuine BGK schemes for the ultra-relativistic flow simulations. J. Comput. Phys. {\bf{349}} (2017), 300-327.

\bibitem{Eckart} Eckart, C.: The thermodynamics of irreversible processes.
III. Relativistic theory of the simple fluid. Phys. Rev.  {\bf 58} (1940) 919.

\bibitem{DHMNS} Denicol, G. S., Heinz, U., Martinez, M., Noronha, J., Strickland, M.: New exact solution of the relativistic Boltzmann equation and its hydrodynamic limit. Phys. Rev. Lett. {\bf{113}} (2014) 202301 



\bibitem{Hwang} Hwang, B.-H.: Global existence of bounded solutions to the relativistic BGK model. Nonlinear Anal. Real World Appl. {\bf{63} } (2022) 103409.

\bibitem{HLY} Hwang, B.-H., Lee, H., Yun, S.-B.: Relativistic BGK model for massless particles in the FLRW spacetime. Kinet. Relat. Models (in press)

\bibitem{HLY2} Hwang, B. H., Lee, M. S., and Yun, S. B.: Relativistic BGK model for gas mixtures. arXiv preprint arXiv:2306.05349. (2023)

\bibitem{HRY} Hwang, B.-H., Ruggeri, T., Yun, S.-B.: On a relativistic BGK model for polyatomic gases near equilibrium. Preprint arXiv:2102.00462.	

\bibitem{HY3} Hwang, B. H., and Yun, S. B.: Stationary solutions to the boundary value problem for the relativistic BGK model in a slab. Kinetic and Related Models, 12(4), 749-764 (2019).

\bibitem{HY4} Hwang, B. H., and Yun, S. B.: Stationary solutions to the Anderson--Witting model of the relativistic Boltzmann equation in a bounded interval. SIAM Journal on Mathematical Analysis, 53(1), 730-753 (2021).

\bibitem{HY2} Hwang, B. H., and Yun, S. B. (2019). Anderson–Witting model of the relativistic Boltzmann equation near equilibrium. Journal of Statistical Physics, 176, 1009-1045.

\bibitem{Juttner} J\"{u}ttner, F.: Das Maxwellsche Gesetz der Geschwindigkeitsverteilung in der Relativtheorie. Ann. Phys. {\bf{339}} (1911) 856--882.

\bibitem{Kremer} Kremer, G. M.: Diffusion of relativistic gas mixtures in gravitational fields. Physica A {\bf{393}} (2014) 76–85.
\bibitem{Kremer2} Kremer, G. M.: Relativistic gas in a Schwarzschild metric. J. Stat. Mech. (2013) P04016.
\bibitem{Kremer3} Kremer, G. M.: Relativistic Ohm and Fourier laws for binary mixtures of electrons with protons and photons. arXiv preprint arXiv:1207.3978 (2012).
\bibitem{KLY} Kim, D., Lee, M.-S., and Yun, S.-B.: Stationary BGK models for chemically reacting gas in a slab. Journal of Statistical Physics, {\bf184}(2) (2021) 1-33.


\bibitem{Nouri} Nouri, A.: An existence result for a quantum BGK model. Mathematical and computer modelling, 47(3-4), 515-529 (2008).


\bibitem{Marle}
Marle, C. : Sur l’'etablissement des 'equations de l’ hydrodynamique des fluides relativistes dissipatifs, I. L’'equation de Boltzmann relativiste, Ann. Inst. Henri Poincar'e, (1969) 10-67
\bibitem{Marle2} Marle, C : Modele cinétique pour l’établissement des lois de la conduction de la chaleur et de la viscositéen théorie de la relativité. C. R. Acad. Sci. Paris, {\bf260} (1965) 6539–6541. 
\bibitem{MBHS} Mendoza, M., and Boghosian, B.M., Herrmann, H.J., Succi, S.: Derivation of the lattice Boltzmann model for relativistic hydrodynamics. Phys. Rev. D {\bf{82}} (2010) 105008.
\bibitem{MNR} Moln\'{a}r, E., Niemi, H., and Rischke, D. H.: Derivation of anisotropic dissipative fluid dynamics from the Boltzmann equation. Phys. Rev. D. {\bf{93}} (2016) 114025	

\bibitem{PR} Pennisi, S., and Ruggeri, T. : A new BGK model for relativistic kinetic theory of monatomic and polyatomic gases. J. Phys. Conf. Ser. {\bf{1035}} (2018) 012005.
\bibitem{PR2} Pennisi, S., and Ruggeri, T.: Relativistic extended thermodynamics of rarefied polyatomic gas. Ann. Phys. {\bf{377}} (2017) 414--445.

\bibitem{SAY86} Sone, Y., Aoki, K. and Yamashita, I.: A study of unsteady strong condensation on a plane condensed phase with special interest in formation of steady profile.  Rarefied Gas Dynamics, vol. 2, Teuber, (1986), 363--383.


\bibitem{Ukai} Ukai, S.: Stationary solutions of the BGK model equation on a finite interval with large boundary data. Transport Theory and Statistical Physics, 21(4-6), 487-500 (1992).



\bibitem{W}
Welander, P. : On the temperature jump in a rarefied gas. Ark. Fys, {\bf7} (1954) 507-553.
	\end{thebibliography}
\end{document}